\newcommand{\weak}{\rightsquigarrow}
\newcommand{\Var}{\mathrm{Var}\,}
\newcommand{\Med}{\mathrm{Med}\,}
\newcommand{\E}{\mathbb{E}}
\newcommand{\p}{\mathbb{P}}
\newcommand{\C}{\mathbb{C}}
\newcommand*{\ind}[1]{\mathbf{1}_{\{#1\}}}
\newcommand*{\Ind}[1]{\mathbf{1}_{#1}}
\newcommand{\bfx}{\mathbf{x}}
\newcommand{\R}{\mathbb{R}}
\newcommand{\id}{\mathrm{Id}}
\newcommand{\tr}{\mathrm{tr}}
\newcommand{\INCOMP}{\mathrm{Incomp}}
\newcommand{\COMP}{\mathrm{Comp}}
\newcommand{\SPARSE}{\mathrm{Sparse}}
\newcommand{\DIST}{\mathrm{dist}}
\newcommand{\rank}{\mathrm{rank}\;}
\newtheorem{lemma}{Lemma}[section]
\newtheorem{theorem}[lemma]{Theorem}
\newtheorem{remark}[lemma]{Remark}
\newtheorem{corollary}[lemma]{Corollary}
\title{Circular law for random matrices with~exchangeable~entries}
\thanks{Support: Polish Ministry of Science and Higher Education Iuventus Plus
  Grant no. IP 2011 000171, and French ANR-2011-BS01-007-01 GeMeCoD and
  ANR-08-BLAN-0311-01 Granma.}
\author{Rados{\l}aw Adamczak}%
\address[RA]{Institute of Mathematics, University of Warsaw, ul. Banacha 2,
  02-097 Warszawa, POLAND.}
\author{Djali{\l} Chafa\"{\i}} %
\address[DC]{Ceremade, Universit\'e Paris-Dauphine, Place du Mar\'echal de
  Lattre de Tassigny, 75775 Paris Cedex 16, FRANCE; and Institut Universitaire
  de France, FRANCE.}
\author{Pawe{\l} Wolff} %
\address[PW]{Institute of Mathematics, University
  of Warsaw, ul. Banacha 2, 02-097 Warszawa, POLAND; and Institute of Mathematics, Polish Academy of Sciences, ul. \'Sniadeckich 8, 00-956 Warszawa, POLAND.}
\date{Winter 2014, compiled on \today.}
\begin{document}

\begin{abstract}
  An exchangeable random matrix is a random matrix with distribution invariant
  under any permutation of the entries. For such random matrices, we show, as
  the dimension tends to infinity, that the empirical spectral distribution
  tends to the uniform law on the unit disc. This is an instance of the
  universality phenomenon known as the circular law, for a model of random
  matrices with dependent entries, rows, and columns. It is also a
  non-Hermitian counterpart of a result of Chatterjee on the semi-circular law
  for random Hermitian matrices with exchangeable entries. The proof relies in
  particular on a reduction to a simpler model given by a random shuffle of a
  rigid deterministic matrix, on Hermitization, and also on combinatorial
  concentration of measure and combinatorial Central Limit Theorem. A crucial
  step is a polynomial bound on the smallest singular value of exchangeable
  random matrices, which may be of independent interest.
\end{abstract}

\keywords{Random permutations; Random matrices; Combinatorial Central Limit
  Theorem; exchangeable distributions; concentration of measure; smallest
  singular value; spectral analysis.}

\maketitle

{\small\tableofcontents}

\section{Introduction}

The spectral measure of an $n\times n$ matrix $A$ with real or complex entries
is defined by
\begin{displaymath}
\nu_A = \frac{1}{n}\sum_{k=1}^n \delta_{\lambda_k},
\end{displaymath}
where $\lambda_1,\ldots,\lambda_n$ are the eigenvalues of $A$, in other words
the roots in $\C$ of the characteristic polynomial, counted with their
algebraic multiplicities, and where $\delta_x$ is the Dirac mass at the point
$x$. Thus $\nu_A$ is a Borel probability measure on $\C$, supported on $\R$ if
$A$ is Hermitian. If $A$ is a random matrix, then $\nu_A$ is a random discrete
probability measure.

An extensive body of work has been devoted to the study of the behavior of the
spectral measure of large dimensional random matrices. While Wigner
\cite{Wigner} considered Hermitian matrices, the non-Hermitian case has also
attracted attention, starting with the early work by Mehta \cite{Mehta1}, who
proved, by using explicit formulas due to Ginibre \cite{Ginibre}, that the
average spectral measure of $n\times n$ matrices with i.i.d.\ standard complex
Gaussian entries scaled by $\sqrt{n}$ converges to the uniform measure on the
unit disc. We will call this measure the \emph{circular law}. In a series of
subsequent developments \cite{G1,G2,Bai,PanZhou,GTCirc}, culminating with the
work of Tao and Vu \cite{TV}, this result has been strengthened to almost sure
convergence and to sequences of random matrices with i.i.d.\ entries of unit
variance. The Tao-Vu theorem is an instance of the \emph{universality
  phenomenon}, which has become one of the main themes of Random Matrix
Theory. Namely the limiting spectral object depends on the actual distribution
of the entries of the random matrix only via a global scale parameter such as
the variance. It is natural to ask whether this universal behavior extends to
models of random matrices with heavy-tailed entries or with dependent entries.
It has been recently shown \cite{BCC2,BCAround} that the spectral measure of
matrices with independent entries in the domain of attraction of an
$\alpha$-stable distribution with $0<\alpha<2$ converges to a measure which
depends only on $\alpha$ and is supported on the whole complex plane. In
particular the limiting distribution is not the circular law. Regarding the
independence assumption, several classes of random matrices have been
discussed in the literature, starting from models with i.i.d.\ rows such as
random Markov matrices \cite{BCC1}, random matrices with i.i.d.\ log-concave
rows \cite{CircLawUnc,InfoNoise}, and random $\pm1$ matrices with i.i.d.\ rows
of given sum \cite{NguyenVuCirc}. The circular law still holds for models with
dependent rows, for instance for blocs of Haar unitary matrices
\cite{MR2480790,MR2919538}, for random doubly stochastic matrices following
the uniform distribution on the Birkhoff polytope \cite{NguyenDoublyStoch},
for random matrices with log-concave unconditional distribution
\cite{RADC_CircUnc}, and for random matrices with i.i.d.\ quaternionic entries
\cite{NguyenORourkeQuaternion} (seen as $2\times 2$ i.i.d.\ real blocks with
possible dependencies inside the blocks).

The present work introduces first a model of random matrices with exchangeable
entries, obtained by shuffling a globally constrained deterministic matrix
using a random uniform permutation. Such random matrices have dependent
entries, rows, and columns. We show that the asymptotic behavior of the
spectral measure is governed by the circular law. The precise formulation is
given in Theorem \ref{th:circular_law} below. This first result is then used
to deduce the circular law for a larger class of random matrices with
exchangeable entries, as stated in Theorem \ref{th:circular_law_2} below. This
second result can be seen as a counterpart of the one of Chatterjee
\cite{ChatterjeeLindeberg}, who proved that the spectral measure of symmetric
random matrices with exchangeable entries in the upper triangle converges to
Wigner's semi-circular law, which is the universal limit for random matrices
with i.i.d.\ entries of finite variance. Our model of random matrices with
exchangeable entries, like all the available models belonging to the
universality class of the circular law, has symmetries related to the
canonical basis. On the other hand, we stress that the sole invariance under
permutation of rows and columns, which is weaker than full exchangeability, is
not sufficient, as shown by the simple example of Haar unitary random matrices
for which the limiting spectral distribution is the uniform distribution on
the unit circle (not the unit disc!) often referred to as the arc law
\cite{MR2919538}. On the other hand, note that the uniform law on the Birkhoff
polytope of doubly stochastic matrices is invariant by permutations of rows or
columns, but is not exchangeable globally.

A main tool in our proof is a lower bound on the smallest singular value of
random matrices with exchangeable entries, formulated in Theorem
\ref{th:ssv}. In recent years, an extensive amount of work has been devoted
to the analysis of the smallest singular value of random matrices with
(partially) independent entries or rows, in connection with the circular law,
but also with numerical analysis or geometry, which makes our result of
independent interest. Our approach follows the path introduced by Rudelson and
Vershynin in \cite{RV} for random matrices with i.i.d.\ entries, more
precisely what they call the ``soft approach'' (nothing more is needed for the
purpose of the circular law). However several technical problems related to the lack of independence among the entries have to be
overcome.

\subsection*{Main results}

For any $n\geq1$, let $\bfx^{(n)} = (\bfx_{ij}^{(n)})_{1\le i,j\le n}$ be a
deterministic real matrix, with
\begin{itemize}
\item[(A1)] $\sum_{i,j=1}^n \bfx^{(n)}_{ij} = 0$;
\item[(A2)] $\sum_{i,j=1}^n |\bfx^{(n)}_{ij}|^2 = n^2$.
\end{itemize}
Thanks to assumption (A2) we have
\begin{align}\label{def:K}
  K_n:=\max_{1\leq i,j\leq n}|\bfx^{(n)}_{ij}|\geq1.
\end{align}

Of course, the entries of $\bfx^{(n)}$ depend typically on $n$, hence the
notation. The assumption (A2) puts also a constraint on the sparsity of the
vector $\bfx^{(n)}$, depending on $K_n$.

Let now $\pi_n$ be a uniform
random permutation of the set $[n]\times [n]=\{(i,j):1\leq i,j\leq n\}$, where
$[n] = \{1,\ldots,n\}$. We consider the random matrix
\begin{equation}\label{eq:model}
  X^{(n)} = (X_{ij}^{(n)})_{1\le i,j\le n}
  \quad\text{where}\quad
  X^{(n)}_{ij} := \bfx^{(n)}_{\pi_n(i,j)}.
\end{equation}
The matrix $X^{(n)}$ inherits the structure of $\bfx^{(n)}$. Namely, from (A1-A2) we get
\[
\sum_{i,j=1}^nX^{(n)}_{ij}=0
\quad\text{and}\quad
\sum_{i,j=1}^n|X^{(n)}_{ij}|^2=n^2
\quad\text{and}\quad
\max_{1\leq i,j\leq n}|X^{(n)}_{ij}|=K_n.
\]
Since the uniform law on the symmetric group is invariant under translation (it
is a normalized Haar measure), it follows that the law of $X^{(n)}$ is
invariant under any deterministic permutation of its $n^2$ coordinates, i.e. the random vector $(X^{(n)}_{ij}:(i,j)\in [n]\times[n])$ of
$\R^{n^2}$ has exchangeable coordinates. In other words, for every permutation
$\sigma$ of $[n]\times[n]$,
\[
{(X^{(n)}_{\sigma(i,j)})}_{1\leq i,j\leq n} %
=
{(\bfx^{(n)}_{(\pi_n \circ\sigma)(i,j)})}_{1\leq i,j\leq n}
\overset{d}{=} %
{(\bfx^{(n)}_{\pi_n(i,j)})}_{1\leq i,j\leq n}
= %
{(X^{(n)}_{i,j})}_{1\leq i,j\leq n}.
\]
It follows that for every
$(i,j)\in[n]\times[n]$, and for every $(k,l)\in[n]\times[n]$ with
$(i,j)\neq(k,l)$,
\begin{equation}\label{eq:xmeancov}
  \E(X^{(n)}_{ij})=0
  \quad\text{and}\quad
  \E(|X^{(n)}_{ij}|^2)=1
  \quad\text{and}\quad
  \E(X^{(n)}_{ij}X^{(n)}_{kl})=-\frac{1}{n^2-1}.
\end{equation}
Except for special choices of $\bfx^{(n)}$, the random matrix $X^{(n)}$ is
non-Hermitian, and even non-normal with high probability. Normality means here
commutation with the transpose-conjugate. The random matrix $X^{(n)}$ has
exchangeable entries obtained by shuffling the globally constrained
deterministic matrix $\bfx^{(n)}$ using the random uniform permutation
$\pi_n$. Such random matrices have dependent entries, dependent rows, and
dependent columns. The simulation of a random uniform permutation such as
$\pi_n$ amouts to approximately $2n^2\log_2(n)$ bits. The randomness in
$X^{(n)}$ is the one of $\pi_n$, which should be compared with the $n^2$ bits
needed for a standard random $n\times n$ symmetric Bernoulli $\pm1$ matrix.

Our first main result is a bound on the smallest
singular value of the matrix $X^{(n)}$. For an $n\times n$ real or complex
matrix $A$ we denote by
\[
s_1(A) \ge \cdots \ge s_n(A)
\]
the singular values of $A$, i.e. eigenvalues of $\sqrt{AA^\ast}$ where
$A^*=\overline{A}^\top$ (transpose-conjugate). The operator norm of $A$ is
$\|A\|=\max_{|x|=1}|Ax|=s_1(A)$, where $|x|$ denotes the Euclidean norm in
$\C^n$ or in $\R^n$. We have $s_n(A)=\min_{|x|=1}|Ax|$. The matrix $A$ is
invertible iff $s_n(A)>0$ and in this case $s_n(A)=\|A^{-1}\|^{-1}$. The
Hilbert-Schmidt norm or the Frobenius norm of $A$ is
$\|A\|_{\mathrm{HS}}=\sqrt{\tr(AA^*)}=\sqrt{s_1(A)^2+\cdots+s_n(A)^2}$.

\begin{theorem}[Smallest singular value]\label{th:ssv}
  If $X^{(n)}$ is as in \eqref{eq:model} then for every $z\in\C$, there exists
  a constant $C_{K_n,z}>0$ depending only on $K_n$ (defined in \eqref{def:K}) and $z$ such that for every
  $\varepsilon>0$,
  \begin{displaymath}
    \p\Big(s_n(X^{(n)} - z\sqrt{n}\id) \le \varepsilon n^{-1/2}\Big) %
    \le C_{K_n,z}\Big(\varepsilon + \frac{1}{n^{1/2}}\Big).
  \end{displaymath}
\end{theorem}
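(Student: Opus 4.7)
I follow the ``soft'' approach of Rudelson-Vershynin, adapted to the shuffle model. Write $M=X^{(n)}-z\sqrt n\,\id$, so that $s_n(M)=\inf_{|v|=1}|Mv|$, and decompose the unit sphere of $\R^n$ into compressible vectors $\COMP(\delta,\rho)$ (within Euclidean distance $\rho$ of some $\delta n$-sparse unit vector) and incompressible vectors $\INCOMP(\delta,\rho)$, for suitable small $\delta,\rho$ depending only on $K_n$ and $z$; one bounds the infimum of $|Mv|$ separately on each piece. A preliminary operator-norm bound $\|M\|\le C(K_n,z)\sqrt n$, obtained from a net and concentration argument on $\pi_n$, is used throughout.

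For compressible vectors, I would use an $\varepsilon$-net. For fixed unit $v$, the covariance identities \eqref{eq:xmeancov} yield $\E|Mv|^2$ of order $n$, while $\pi\mapsto|M(\pi)v|$ has bounded differences of order $K_n$ under transpositions of $\pi_n$. Concentration of measure on the symmetric group (Maurey or bounded-differences) then gives $\p(|Mv|\le c(z)\sqrt n)\le e^{-cn/K_n^2}$, which a net of cardinality $e^{Cn\log(1/\rho)}$ in $\COMP(\delta,\rho)$ absorbs. The identical argument applied to $M^\top$ (whose distribution is the same up to relabelling of $\pi_n$) controls left compressible vectors, and will serve as structural input for the incompressible step.

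For incompressible vectors, the Rudelson-Vershynin invertibility-via-distance lemma gives
\[
\p\Bigl(\inf_{v\in\INCOMP(\delta,\rho)}|Mv|\le\varepsilon n^{-1/2}\Bigr)
\le\frac{1}{\delta n}\sum_{k=1}^n\p\bigl(\DIST(C_k,H_k)\le \rho^{-1}\varepsilon\bigr),
\]
where $C_k$ is the $k$-th column of $M$, $H_k=\SPAN\{C_j:j\neq k\}$, and $Y_k$ is a unit normal to $H_k$, so that $\DIST(C_k,H_k)=|\langle C_k,Y_k\rangle|$. The decisive combinatorial observation is the following. Conditionally on the $\sigma$-algebra $\mathcal F_k$ generated by $\pi_n$ on $[n]\times([n]\setminus\{k\})$, both $H_k$ and $Y_k$ are deterministic, while the column $C_k$ is, up to the deterministic shift $-z\sqrt n\,e_k$, a uniform random permutation of the $n$ ``leftover'' values $(\bfx^{(n)}_\alpha)_{\alpha\in A_k}$, where $A_k\subset[n]\times[n]$ is the complement of $\pi_n([n]\times([n]\setminus\{k\}))$. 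Hence conditionally on $\mathcal F_k$, $\langle C_k,Y_k\rangle=\sum_{i=1}^n(Y_k)_i\,y_{\tau(i)}+\mathrm{const}$ for a uniform random permutation $\tau$ of $[n]$, reducing the problem to one-permutation anti-concentration.

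The main obstacle is to secure the two regularity inputs that make this permutation-sum anti-concentration effective at the rate $\varepsilon+C(K_n,z)n^{-1/2}$: (i) $Y_k$ is, with probability $1-e^{-cn}$, ``spread'' -- both incompressible and bounded away from the constant direction $\mathbf 1/\sqrt n$; and (ii) the leftover values $(y_i)$ retain a nondegenerate share of the variance budget of (A2), so that $\sum_i(y_i-\bar y)^2\ge cn$. For (i), the incompressibility of $Y_k$ is bootstrapped automatically from the compressible bound for $M^\top$ via the identity $M^\top Y_k=\langle C_k,Y_k\rangle\,e_k$: on the bad event where $\DIST(C_k,H_k)$ is small, $|M^\top Y_k|$ is small, so $Y_k$ cannot be compressible without contradicting the $M^\top$-compressible bound (using the operator-norm bound to pass from a nearby sparse vector to $Y_k$); separation from the constant direction is handled by a simpler argument using the non-concentration of row sums of $M$. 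For (ii), concentration for sampling without replacement (Serfling-type) applied to $\sum_{\alpha\in A_k}(\bfx^{(n)}_\alpha)^2$ and $\sum_{\alpha\in A_k}\bfx^{(n)}_\alpha$ shows that the leftover variance is of order $n$ with polynomially high probability. Under (i) and (ii), the permutation sum has standard deviation of order $1$, and a Berry-Esseen-type version of Hoeffding's combinatorial CLT (equivalently, a Littlewood-Offord-type estimate for permutation statistics) yields the required conditional small-ball bound $\varepsilon+O(n^{-1/2})$. Integrating out $\mathcal F_k$, substituting into the invertibility-via-distance inequality, and combining with the compressible estimate yields the claimed bound.
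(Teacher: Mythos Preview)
Your overall architecture matches the paper's: compressible/incompressible split, Rudelson--Vershynin invertibility-via-distance, conditioning on all columns but one so the remaining column is a uniform permutation of the leftover values, and a Berry--Esseen version of the combinatorial CLT for the inner product $\langle Y_k,C_k\rangle$. Two points, however, are genuinely underestimated.

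\textbf{The concentration input.} The claim that ``bounded differences of order $K_n$ under transpositions'' yields $\p(|Mv|\le c\sqrt n)\le e^{-cn/K_n^2}$ is false. A transposition of $\pi_n$ changes at most two entries of $X$, so the bounded difference of $\pi\mapsto|M(\pi)v|$ is indeed $O(K_n)$; but Maurey's inequality on $S_{m}$ with $m=n^2$ gives only $\exp(-ct^2/(mK_n^2))=\exp(-ct^2/(n^2K_n^2))$, which for $t\sim\sqrt n$ is $\exp(-c/(nK_n^2))$ and cannot beat even a polynomial-size net. The paper instead proves and uses a Talagrand-type \emph{convex-distance} concentration inequality on the symmetric group (Theorem~\ref{th:convex_concentration}): for convex $L$-Lipschitz (in Hilbert--Schmidt norm) functionals of $X$, one gets the dimension-free rate $\exp(-ct^2/K_n^2)$. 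This stronger inequality is what drives the compressible bound, the operator-norm bound, and the ``leftover variance'' step; Serfling-type or martingale bounds do not give the required exponent.

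\textbf{The structure of the normal vector.} Showing $Y_k$ is incompressible and globally bounded away from the single direction $\mathbf 1/\sqrt n$ is not enough to run the combinatorial CLT with error $O(n^{-1/2})$. One must first truncate $Y_k$ to the index set $I=\{i:|(Y_k)_i|\le (\theta n)^{-1/2}\}$ (to control the $L$ in \eqref{eq:BE_bound}), condition additionally on $\{X_{ik}:i\notin I\}$, and then lower-bound the conditional variance $\sigma^2\asymp |\eta'_{|I}|^2-|I|^{-1}(\sum_{i\in I}\eta'_i)^2$, which is the squared distance of the real part of $Y_{k|I}$ from the constant direction \emph{on $I$}. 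The paper handles this by introducing the set $\mathcal G(\theta,r)$ of unit vectors that are within $r$ of a constant on some subset of size $\ge(1-\theta)n$, building a net in $\mathcal G(\theta,r)$, and using the same compressible-type argument (for $(B')^*$, which annihilates $Y_k$) to show $Y_k\notin\mathcal G(\theta,r)$ with high probability. Your ``non-concentration of row sums'' would only exclude the single vector $\mathbf 1/\sqrt n$, not this larger class, and would leave $\sigma$ uncontrolled after the truncation to $I$.
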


An explicit dependence of $C_{K,z}$ on $K$ and $z$ is given in Theorem
\ref{th:ssv_detailed}.

Let ${(\nu_n)}_{n\geq1}$ be a sequence of random probability measures on $E$
with $E=\C$ or $E=\R$, and let $\nu$ be a probability measure on $E$. We say
that the sequence ${(\nu_n)}_{n\geq1}$ tends as $n\to\infty$ weakly in
probability to $\nu$, and we denote
\[
\nu_n\underset{n\to\infty}{\weak}\nu,
\]
if for any $f\in\mathcal{C}(E,\R)$ and $\varepsilon>0$,
\[
\lim_{n\to\infty}\mathbb{P}
\left(\left|\int\!f\,d\nu_n-\int\!f\,d\nu\right|\geq\varepsilon\right)
=0,
\]
where $\mathcal{C}(E,\R)$ is the class of all bounded continuous real valued
functions on $E$. This is equivalent to saying that ${(\nu_n)}_{n\geq1}$, seen
as a sequence of random variables in the space of Borel probability measures
on $E$, converges in probability to $\nu$ (see \cite[Definition 1.2]{TV}).

We denote by $\nu^{\mathrm{circ}}$ the uniform law on the unit disk of $\C$,
with density $z\mapsto\pi^{-1}\mathbf{1}_{\{z\in\C:|z|\leq1\}}$. Our second
main result concerns the limiting behavior of the spectral measure of
$X^{(n)}$.

\begin{theorem}[Circular law for shuffled matrices]\label{th:circular_law}
  Let $A^{(n)}=\frac{1}{\sqrt{n}}X^{(n)}$ where $X^{(n)}$ is as in
  \eqref{eq:model}. If $K_n = \mathcal{O}(n^{1/(10+\delta)})$ for some $\delta
  > 0$, then $\nu_{A^{(n)}}\underset{n\to\infty}{\weak}\nu^{\mathrm{circ}}$.
\end{theorem}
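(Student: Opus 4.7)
The plan is to use the Hermitization (logarithmic potential) method of Girko. Fix $z\in\C$, set $B^{(n)}(z) := A^{(n)} - z\id$, and let $\nu_n^z$ be the empirical distribution of the singular values of $B^{(n)}(z)$. The identity
$\tfrac{1}{n}\log|\det(A^{(n)}-z\id)| = \int_0^\infty \log x \, d\nu_n^z(x)$
reduces the circular law, via the standard logarithmic potential criterion (see e.g.\ \cite{TV}), to establishing for Lebesgue-almost every $z\in\C$: (i) weak-in-probability convergence of $\nu_n^z$ to the singular value distribution $\nu^z$ associated with the circular law (i.e.\ the limit of singular values of $\tfrac{1}{\sqrt n}G - z\id$ for $G$ with i.i.d.\ entries of unit variance); and (ii) uniform integrability in probability of $x\mapsto\log x$ with respect to $\nu_n^z$.

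For (i) the strategy is to compare $X^{(n)}$ to an i.i.d.\ matrix. By \eqref{eq:xmeancov} the covariance between distinct entries is only $-1/(n^2-1)$, so at the level of the Stieltjes transform
$m_n^z(\eta) = \tfrac{1}{n}\tr\bigl((B^{(n)}(z)B^{(n)}(z)^* - \eta \id)^{-1}\bigr)$
one expects the same limiting fixed-point equation as in the i.i.d.\ case. A natural implementation is a Lindeberg-type interpolation through transpositions of pairs of entries of $X^{(n)}$ (which preserve the constraints (A1)--(A2) and the exchangeable law), in the spirit of the combinatorial CLT used by Chatterjee \cite{ChatterjeeLindeberg}. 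Equivalently, one can expand the moments $\tfrac{1}{n}\tr\bigl((B^{(n)}(z)B^{(n)}(z)^*)^k\bigr)$ as sums over closed walks: pair-matched walks give the same contribution as in the i.i.d.\ case, while walks with triple or higher collisions are suppressed by the sparsity/boundedness estimate coming from (A2) together with the growth condition $K_n=\mathcal{O}(n^{1/(10+\delta)})$.

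For (ii) the infinity end is easy: the deterministic bound $\sum_i s_i(B^{(n)}(z))^2 = \|B^{(n)}(z)\|_{\mathrm{HS}}^2$ together with (A2) gives $\int x^2\,d\nu_n^z(x)$ bounded in probability, hence $\log_+ x$ is uniformly integrable. The hard end is $0$. Theorem \ref{th:ssv} supplies $s_n(B^{(n)}(z))\ge n^{-C}$ for some $C>0$ with probability $1-o(1)$, dealing with the very smallest singular value. The contribution of intermediate small singular values is handled by proving, with probability tending to $1$,
\[
s_{n-k}(B^{(n)}(z)) \ge c(z)\,\frac{k}{n}, \qquad n^{1-\delta'}\le k\le n-1,
\]
obtained by applying Theorem \ref{th:ssv} (or a minor variant) to $(n-k)\times n$ sub-matrices of $X^{(n)}$ and running a distance-to-subspace argument \`a la Rudelson--Vershynin. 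The novel point here, compared to the i.i.d.\ setting, is that conditional on part of $X^{(n)}$ the remaining entries form a uniform shuffle of a constrained residual vector, so the required anti-concentration inequalities have to be upgraded to this conditional exchangeable setting.

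I expect the main obstacle to be step (i). A naive entry-by-entry Lindeberg swap is impossible because replacing a single $X_{ij}^{(n)}$ destroys the global constraints and the exchangeable law, so the interpolation must be performed through paired transpositions; one must then show that the accumulated error on $m_n^z(\eta)$ is $o(1)$ despite being summed over the $\sim n^2$ transpositions needed to scramble $X^{(n)}$ against an independent reference. It is precisely in this error control that the quantitative bound on $K_n$ enters, and it is natural to expect that the polynomial growth $K_n=\mathcal{O}(n^{1/(10+\delta)})$ stated in the theorem reflects a power-counting trade-off in this step.
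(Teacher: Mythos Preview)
Your overall architecture---Hermitization, then (i) convergence of the singular-value law of the shifts and (ii) uniform integrability of the logarithm---is exactly the paper's. Two points deserve correction.

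\textbf{Step (i): the comparison mechanism.} Your proposed ``interpolation through transpositions of pairs of entries'' does not do what you want: transpositions preserve the exchangeable law, so they cannot interpolate $X^{(n)}$ toward an i.i.d.\ (Gaussian) matrix. What the paper actually does is invoke Chatterjee's Lindeberg principle for exchangeable vectors (Theorem~\ref{th:Chatterjee}), which compares an exchangeable $X\in\R^{n^2}$ directly to the vector $Y_i=\hat\mu+\hat\sigma(Z_i-\bar Z)$ built from an independent standard Gaussian $Z$; the error is controlled by $w_4^{1/2}L_2'(f)n^{1/2}+w_3L_3'(f)n$. One then computes $L_2'$ and $L_3'$ for the real/imaginary part of $\tfrac{1}{2n}\tr(M-\xi\id)^{-1}$ with $M$ the Hermitized $2n\times 2n$ block matrix, getting $L_2'\le C_\xi n^{-2}$ and $L_3'\le C_\xi n^{-5/2}$. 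This yields convergence of the Stieltjes transforms under the mild condition $K_n=o(n^{1/2})$ (via $\E|X_{11}|^4=o(n)$, $\E|X_{11}|^3=o(n^{1/2})$). No transposition coupling is needed, and the global constraints are not an obstacle---Chatterjee's theorem is designed for exactly this situation.

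\textbf{Where the exponent $1/(10+\delta)$ actually enters.} You locate the bottleneck in step (i); in fact step (i) only needs $K_n=o(n^{1/2})$. The assumption $K_n=\mathcal{O}(n^{1/(10+\delta)})$ is forced by step (ii), specifically by the quantitative smallest-singular-value bound (Theorem~\ref{th:ssv_detailed}): the probability estimate there carries a term of order $K_n^4(K_n+|z|)\log^{3/2}(1+K_n+|z|)/\sqrt{n}$, and one needs this to tend to zero to conclude $s_n\ge n^{-\beta}$ with probability $1-o(1)$. This is exactly what drives $K_n^{5}/\sqrt{n}\to 0$.

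\textbf{Intermediate singular values.} The paper does not apply the smallest-singular-value theorem to submatrices. Instead it proves a distance-to-subspace lemma (Lemma~\ref{le:distance}) for the rows of $X^{(n)}+R$ via the combinatorial Talagrand concentration inequality (Theorem~\ref{th:convex_concentration}), then feeds this into Tao--Vu's negative second moment identity $\sum_j s_j(B)^{-2}=\sum_j \DIST(Z_j,H_j)^{-2}$ to get $s_{n-i}\ge c\,i/n$ for $i\ge n^\gamma$ with probability tending to one (Lemma~\ref{le:intermediate_sv}). Your ``distance-to-subspace \`a la Rudelson--Vershynin'' is in the right spirit, but the anti-concentration ingredient here is concentration for convex Lipschitz functions under random permutations, not a Berry--Esseen CLT.
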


One can check easily that the normalization by $n^{-1/2}$ ensures tightness,
namely

\begin{multline*}
  \int\!|\lambda|^2\,d\nu_{A^{(n)}}(\lambda)
  =\frac{1}{n}\sum_{k=1}^n|\lambda_k(A^{(n)})|^2 \\
  \leq
  \frac{1}{n^2}\sum_{k=1}^ns_k(X^{(n)})^2
  =\frac{1}{n^2}\|X^{(n)}\|_{\mathrm{HS}}^2
  =\frac{1}{n^2}\sum_{i,j=1}^n|X^{(n)}_{ij}|^2
  =1.
\end{multline*}

Theorem \ref{th:circular_law} leads to the following result concerning more
general random matrices with exchangeable entries, beyond model
\eqref{eq:model}. Namely, for every $n\gg1$, let
\begin{equation}\label{eq:model2}
  Y^{(n)} = (Y^{(n)}_{ij})_{1\le i,j \le n}
\end{equation}
be an $n\times n$ matrix with exchangeable
real entries, and define
\begin{displaymath}
  \mu_n := \frac{1}{n^2}\sum_{i,j=1}^n Y^{(n)}_{ij}%
  \quad\text{and}\quad%
  \sigma_n^2 := \frac{1}{n^2}\sum_{i,j=1}^n (Y^{(n)}_{ij} - \mu_n)^2.
\end{displaymath}

\begin{theorem}[Circular law for matrices with exchangeable
  entries]\label{th:circular_law_2}
  Let $Y^{(n)}$ be as in \eqref{eq:model2}. If for $n\gg1$,
  $\sigma_n^2 > 0$ a.s. and for some $\delta > 0$,
  \[
  \limsup_{n\to \infty}
  \E\Big[\left(\sigma_n^{-1}|Y^{(n)}_{11}-\mu_n|\right)^{20+\delta}\Big]<\infty,
  \]
  then, denoting
  \[
  B^{(n)} = \frac{1}{\sqrt{n}\sigma_n} (Y^{(n)}_{ij} - \mu_n)_{1\le i,j\le n},
  \]
  we have
  \[
  \nu_{B^{(n)}}\underset{n\to\infty}{\weak}\nu^{\mathrm{circ}}.
  \]
\end{theorem}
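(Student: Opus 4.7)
The plan is to condition on the unordered multiset of entries of $Y^{(n)}$, thereby reducing Theorem \ref{th:circular_law_2} to Theorem \ref{th:circular_law} applied to the (now random) data $\tilde{\bfx}^{(n)}_{ij} := \sigma_n^{-1}(Y^{(n)}_{ij}-\mu_n)$. Finite exchangeability of $Y^{(n)}$ says that, conditionally on its multiset of entries, $Y^{(n)}$ is distributed as a uniform random rearrangement of those entries. Since $\mu_n$ and $\sigma_n$ are functions of this multiset alone, on each realization $\tilde{\bfx}^{(n)}$ is well-defined (up to permutation) and, by construction,
$$\sum_{i,j}\tilde{\bfx}^{(n)}_{ij}=0\qquad\text{and}\qquad\sum_{i,j}|\tilde{\bfx}^{(n)}_{ij}|^2=n^2,$$
i.e.\ assumptions (A1)--(A2). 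Consequently, conditionally on $\tilde{\bfx}^{(n)}$, the matrix $B^{(n)}$ has exactly the law of $\tfrac{1}{\sqrt{n}}X^{(n)}$ from \eqref{eq:model} built from the deterministic data $\tilde{\bfx}^{(n)}$ and an independent uniform random permutation of $[n]\times[n]$.

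Next I would control $K_n:=\max_{i,j}|\tilde{\bfx}^{(n)}_{ij}|$. Using exchangeability of $Y^{(n)}$, a union bound and Markov's inequality, one obtains for every $t>0$ and $n$ large enough
$$\p(K_n>t)\;\le\;n^2\,\p\Big(\sigma_n^{-1}|Y^{(n)}_{11}-\mu_n|>t\Big)\;\le\;\frac{n^2 M}{t^{20+\delta}},$$
where $M$ is any uniform upper bound (for $n\gg1$) on $\E[(\sigma_n^{-1}|Y^{(n)}_{11}-\mu_n|)^{20+\delta}]$, which exists by hypothesis. Picking $0<\delta'<\delta$ small enough that $(20+\delta)/(10+\delta')>2$ and taking $t=n^{1/(10+\delta')}$, we deduce $K_n\le n^{1/(10+\delta')}$ with probability $1-o(1)$, which is precisely the growth rate on $K_n$ required by Theorem \ref{th:circular_law}.

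To combine the two steps, fix $f\in\mathcal{C}(\C,\R)$ and $\varepsilon>0$, and set $E_n:=\{K_n\le n^{1/(10+\delta')}\}$, so that $\p(E_n^c)=o(1)$. On $E_n$ the deterministic hypothesis of Theorem \ref{th:circular_law} is satisfied by $\tilde{\bfx}^{(n)}$, so applying the theorem conditionally on $\tilde{\bfx}^{(n)}$ gives that $\p\bigl(\bigl|\int f\,d\nu_{B^{(n)}}-\int f\,d\nu^{\mathrm{circ}}\bigr|\ge\varepsilon\,\big|\,\tilde{\bfx}^{(n)}\bigr)$ tends to $0$. Taking expectation, splitting on $E_n$ and $E_n^c$ and using dominated convergence yields the claimed weak convergence in probability of $\nu_{B^{(n)}}$ to $\nu^{\mathrm{circ}}$.

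The main obstacle is the \emph{uniformity} required in this last conditional step: Theorem \ref{th:circular_law} is stated for a single deterministic sequence of matrices, whereas here we need the conclusion to hold uniformly across all such sequences sharing the bound $K_n\le C n^{1/(10+\delta)}$ for a fixed constant $C$. This should be extracted from the quantitative nature of its proof, whose key ingredients---the smallest singular value bound of Theorem \ref{th:ssv} and the Hermitization/combinatorial CLT arguments---depend on $\bfx^{(n)}$ only through $K_n$; in practice, this uniformity can be formalized by a standard subsequence/diagonal argument.
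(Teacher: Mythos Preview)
Your proposal is correct and follows essentially the same route as the paper: reduce to Theorem~\ref{th:circular_law} by conditioning on the multiset of entries (the paper implements this by introducing an independent uniform permutation $\pi_n$ and using that the permuted matrix has the same law, then applying Fubini), control $K_n$ via union bound plus Markov with the $(20+\delta)$-th moment, and handle the uniformity over admissible $\bfx^{(n)}$ by exactly the subsequence argument you anticipate. The only cosmetic difference is that the paper works with a metric $d$ for weak convergence and states the uniform bound $\sup_{\bfx\in\mathcal{M}_n(\delta)}\p(d(\nu_{A^{(n)}(\bfx)},\nu^{\mathrm{circ}})>\varepsilon)\to 0$ up front, which cleanly replaces your appeal to dominated convergence.
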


\subsection*{Discussion and open problems}

\subsubsection*{Assumptions.} It is not enough to assume in Theorem
\ref{th:circular_law} that $K_n = \mathcal{O}(\sqrt{n})$, since one could
construct a matrix with at most $n/2$ nonzero entries, which would create a
substantial atom at zero in the spectral measure. It is tempting to conjecture
that $K_n = \mathcal{O}(n^{1/(2+\delta)})$ is sufficient for the circular law
to hold. This would allow to prove Theorem \ref{th:circular_law_2} under the
assumption on moments of order $4+\delta$. On the other hand, Chatterjee's
counterpart \cite{ChatterjeeLindeberg} to Theorem \ref{th:circular_law_2} in the Hermitian case works
under the assumption which when translated to our setting would be $\E
(|Y^{(n)}_{11} - \mu_n|/\sigma_n)^{4} = o(n^{2/3})$.

\subsubsection*{Operator norm.}

One way to improve our growth/integrability assumptions would be to obtain a
better bound on the operator norm of $X^{(n)}$ defined in \eqref{eq:model}.
The simple bound we use (Lemma \ref{le:operator_norm}) is $\E\|X^{(n)}\|\le
CK_n\sqrt{n}$. By analogy with the available results for matrices with i.i.d.\
entries, one may ask if for some $\alpha < 1/2$ and a universal constant $C$,
\begin{displaymath}
  \E\|X^{(n)}\| \overset{?}{\le} \sqrt{2n} +CKn^{\alpha}.
\end{displaymath}
Another way of weakening our assumptions would be to improve the factor
$1/\sqrt{n}$ in the probability bound on the smallest singular value. This
factor comes from the Combinatorial Central Limit Theorem and cannot be
further improved by our methods. In the i.i.d. case it has been strengthened
to $e^{-cn}$ by relating the bound on the smallest singular value to the
Littlewood-Offord problem \cite{RV,TV1,MR2507275}. While certain ingredients
of this approach can be transferred to the exchangeable setting, an important
step seems to rely on a Fourier analytic argument, related to Esseen's
inequality, involving independence of the entries of the matrix. It would be
interesting to develop tools which would allow to use similar ideas in the
exchangeable case. Note that since the right hand side in Theorem
\ref{th:ssv} is not summable in $n$, one cannot use the first Borel-Cantelli
lemma in order to get an almost sure lower bound on
$s_n(X^{(n)}-z\sqrt{n}\id)$.

\subsubsection*{Other models.}
One can consider a model of the form $M_n = n^{-1/2}(X^{(n)} + F_n)$, where
$F_n$ is deterministic with $\rank F_n = o(n)$, satisfying a bound on the
operator norm. An inspection of the proof of Theorem \ref{th:circular_law}
reveals that if $\|F_n\| =\mathcal{O}(\sqrt{n})$, then one can obtain the
convergence to the circular law for the spectral measure of $M_n$ without
changing the assumptions on $K_n$. On the other hand if $\|F_n\| =
o(n/\log^{3/2}n)$, then an adaptation of our proof gives the result for
$K_n=\mathcal{O}(1)$. We remark that for matrices with i.i.d. entries with
mean zero and variance one it is enough to assume that $\|F_n\|_{\mathrm{HS}}
=\mathcal{O}(n)$ \cite{DjalilNoncentral}. Improving this part in the
exchangeable case would allow to treat e.g. the adjacency matrices of uniform
random directed graphs $D(n,m_n)$. More precisely, consider a sequence $m_n$
of positive integers.
Let $D_n$ be the adjacency matrix of the random graph $D(n,m_n)$, i.e.\ a
graph chosen uniformly from the set of all directed random graphs on $n$
vertices, with $m_n$ directed edges (including possible loops). One may ask
about the limiting spectral behavior of $D_n$ (depending on the behavior of
$m_n$ as $n\to\infty$). In some sense, this random graph model lies between
the oriented Erd\H{o}s-R\'enyi random graph model \cite{BCCSparse}, and the
uniform random oriented regular graph model \cite{BCAround}. The singular
values of $D_n$ (as well as its shifts by multiples of identity) may be
analyzed using Theorem \ref{th:shifted_matrix} below. More generally, one may
think about a Boltzmann-Gibbs probability distribution on the set of oriented
graphs with $n$ vertices, with density with respect to the counting measure
proportional to $\exp(-\beta_nH_n)$ where $H_n$ is a symmetric functional, and
$\beta_n>0$.

\subsection*{Outline}
In Section \ref{se:nuz} we analyze the limiting distribution of the singular
values of shifts by $\sqrt{n}z\id_n$ for the model \eqref{eq:model}, which is essential for the proof
of Theorem \ref{th:circular_law_2}, and which may be actually of independent
interest. Section \ref{se:Talagrand} is devoted to the derivation of a
combinatorial version of the Talagrand concentration inequality for convex
functions and product measures, which may be also of independent interest.
This inequality is an important tool in the proof of Theorem \ref{th:ssv} and Theorem
\ref{th:circular_law_2}. Theorems \ref{th:ssv}, \ref{th:circular_law},
\ref{th:circular_law_2} are proved in Sections \ref{se:ssv},
\ref{se:circular_law}, \ref{se:circular_law_2} respectively.

\subsection*{Notations.}
We will frequently write $K,\bfx,X$ instead of $K^{(n)},\bfx^{(n)},X^{(n)}$
for convenience, when no confusion is possible. In the whole article by $C,c$
we denote universal constants and by $C_a,c_a$ constants depending only on the
parameter $a$. In both cases the values of constants may differ between
occurrences.

\section{Limiting distribution of the singular values of shifts}
\label{se:nuz}

This section is devoted to the proof of the following result.

\begin{theorem}[Limiting singular values distribution of shifted
  matrices]\label{th:shifted_matrix}
  For every $z\in\mathbb{C}$, there exists a unique probability measure
  $\nu_z$ on $\mathbb{R}_+$, absolutely continuous with respect to the
  Lebesgue measure, depending only on $z$, such that if $X^{(n)}$ is as in
  \eqref{eq:model} with $K_n=o(n^{1/2})$, and if
  $A^{(n)}=\frac{1}{\sqrt{n}}X^{(n)}$, then, for every $z\in\mathbb{C}$,
  \begin{itemize}
  \item[(j)]
    \[
    \nu_{\sqrt{(A^{(n)}-z\id_n)^*(A^{(n)}-z\id_n)}}
    =\frac{1}{n}\sum_{k=1}^n\delta_{s_k(A^{(n)}-z\id_n)}
    \underset{n\to\infty}{\weak}\nu_z;
    \]
   \item[(jj)]
    \[
    U(z)=-\int_{\mathbb{R}_+}\!\log(s)\,\nu_z(ds)
    =
    \begin{cases}
      -\log|z| & \text{if $|z|>1$;} \\
      \frac{1-|z|^2}{2} & \text{otherwise.}
    \end{cases}
    \]
  \end{itemize}
  Furthermore, for $z=0$, the probability distribution $\nu_0$ is the
  so-called Marchenko-Pastur quarter circular law of Lebesgue density
  $x\mapsto\frac{1}{\pi}\sqrt{4-x^2}\mathbf{1}_{[0,2]}(x)$.
\end{theorem}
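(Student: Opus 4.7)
I would follow Girko's Hermitization scheme and identify the limit by moment-matching with the i.i.d.\ case. Introduce the $2n\times 2n$ Hermitian matrix
\[
H_n(z)=\begin{pmatrix} 0 & A^{(n)}-z\id_n \\ (A^{(n)}-z\id_n)^* & 0 \end{pmatrix},
\]
whose eigenvalues are exactly $\pm s_k(A^{(n)}-z\id_n)$ for $k=1,\dots,n$. Convergence of the singular value distribution to $\nu_z$ in (j) is then equivalent to convergence of the ESD of $H_n(z)$ to a symmetric probability measure $\tilde\nu_z$ on $\R$ whose push-forward under $|\cdot|$ is $\nu_z$. Tightness is immediate from $\sum_k s_k(A^{(n)}-z\id_n)^2 = \|A^{(n)}-z\id_n\|_{\mathrm{HS}}^2 = n(1+|z|^2)$.

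To identify the limit, I would run the moment method and compare with a matrix $G_n$ of i.i.d.\ centered variance-one entries, for which (j) and (jj) are classical (Bai, Dozier--Silverstein, Girko). Expanding
\[
\frac{1}{n}\E\tr\bigl[(A^{(n)}-z\id_n)(A^{(n)}-z\id_n)^*\bigr]^k
\]
produces sums over index cycles of monomials $\E\prod_\alpha X^{(n)}_{a_\alpha b_\alpha}$ multiplied by scalar factors in $z,\bar z$. Since $\pi_n$ is uniform on permutations of $[n]\times[n]$, each such expectation depends only on the incidence pattern of the pairs $(a_\alpha,b_\alpha)$ and is a sampling-without-replacement sum over the entries of $\bfx^{(n)}$. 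Using the covariance identities \eqref{eq:xmeancov}, the global constraints (A1)--(A2), and the uniform bound $|X^{(n)}_{ab}|\le K_n$, I would argue that the leading contribution comes from the same planar pair-matching combinatorics that produces the i.i.d.\ moments, while all corrections arising from triple or higher coincidences of indices (or from the $O(n^{-2})$ off-diagonal correlations) carry a factor $(K_n/\sqrt n)^2$ or smaller. The hypothesis $K_n=o(n^{1/2})$ makes these corrections vanish, so the limiting moments match those of the Hermitized i.i.d.\ circular law, which determines $\nu_z$ uniquely.

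To upgrade convergence in expectation to convergence in probability, I would invoke concentration for functionals of $\pi_n$. A single transposition of $\pi_n$ alters at most two entries of $A^{(n)}-z\id_n$, each by $O(K_n/\sqrt n)$, so the Stieltjes transform $\pi\mapsto \frac{1}{2n}\tr(H_n(z)-wI)^{-1}$ is Lipschitz at scale $O(K_n/(n^{3/2}(\Im w)^2))$ per swap. A Hoeffding--Azuma bound along the natural transposition martingale, or the combinatorial Talagrand inequality of Section \ref{se:Talagrand}, then gives fluctuations of order $O(K_n/(\sqrt n(\Im w)^2))=o(1)$, yielding (j). Formula (jj) is then a classical computation: one integrates $\log s$ against the explicit density of $\nu_z$ obtained from the i.i.d.\ limit, and the $z=0$ case reduces to the squared singular values following the Marchenko--Pastur law of ratio one, whose square root is exactly the quarter circular density $\frac{1}{\pi}\sqrt{4-x^2}\ind{0\le x\le 2}$.

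The principal obstacle is the moment-matching in the second step: the non-independence of the entries of $X^{(n)}$ is genuine, and one must enumerate the index-coincidence patterns that arise in the expansion, verify that only the ones matching the i.i.d.\ combinatorics survive in the limit, and show that every other pattern is controlled uniformly by $K_n=o(n^{1/2})$ together with (A1)--(A2). The identity $\E X^{(n)}_{ij}X^{(n)}_{kl}=-1/(n^2-1)$ together with the constraint $\sum_{i,j}|X^{(n)}_{ij}|^2=n^2$ plays a key role in showing that ``correlation'' terms at each level of the expansion are subleading; this bookkeeping is what the detailed proof in this section must carry out.
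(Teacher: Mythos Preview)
Your outline is sound and would lead to a correct proof, but it takes a genuinely different route from the paper. Both arguments begin with the same Hermitization $H_n(z)$, and both compare with the i.i.d.\ model to identify $\nu_z$ (from which (jj) and the $z=0$ case are classical). The divergence is in \emph{how} the comparison is carried out.

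You propose the moment method: expand $\frac{1}{2n}\E\tr H_n(z)^{2k}$, classify the index-coincidence patterns, and show that only the i.i.d.\ pair-matching combinatorics survives under $K_n=o(n^{1/2})$, with a separate concentration step (your transposition bounded-differences estimate $O(K_n/(n^{3/2}(\Im w)^2))$ per swap is correct and does give fluctuations $O(K_n/\sqrt{n})$ via Maurey/Azuma for the symmetric group). This works, but the bookkeeping is heavy---especially with the shift $z$ present, one must track which edges of each path carry $X$-entries versus $z,\bar z$ factors, and then control the sampling-without-replacement mixed moments pattern by pattern.

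The paper bypasses all of this combinatorics by working instead with the Cauchy--Stieltjes transform $\frac{1}{2n}\tr(H_n(z)-\xi\id)^{-1}$ and invoking Chatterjee's generalized Lindeberg principle (Theorem~\ref{th:Chatterjee}) to swap the exchangeable entries of $X^{(n)}$ for a centered Gaussian matrix (with the empirical mean subtracted to match the constraint $\sum_{ij}X_{ij}=0$). The error is controlled by explicit bounds on the second and third partial derivatives of $\frac{1}{2n}\tr G$ with respect to the matrix entries, which are uniform and of order $n^{-2}$ and $n^{-5/2}$ respectively; combined with $\E|X_{11}|^4=o(n)$ and $\E|X_{11}|^3=o(n^{1/2})$ (both immediate from $K_n=o(n^{1/2})$), this gives convergence of $\E g(m_{\nu_{B_n}}(\xi))$ for every smooth test function $g$ in one stroke---no separate enumeration of coincidence patterns, and no separate concentration argument, since the Lindeberg comparison is already at the level of $\E g(\cdot)$.

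In short: your approach is correct and self-contained but labor-intensive; the paper's approach trades the combinatorics for a black-box application of Chatterjee's theorem, at the cost of needing the resolvent derivative estimates.
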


Theorem \ref{th:shifted_matrix} is not valid if we only assume that
$K_n=\mathcal{O}(n^{1/2})$. Indeed, one can construct $\bfx^{(n)}$ in
\eqref{eq:model} in such a way that $K_n=\mathcal{O}(n^{1/2})$ while $0$ is an
eigenvalue of $X^{(n)}$ of multiplicity at least $cn$, which would contradict
the fact that $\nu_z$ is absolutely continuous.

\begin{proof}[Proof of Theorem \ref{th:shifted_matrix}]
  First of all, note that since $K_n=o(n^{1/2})$, we have
\begin{align}\label{eq:bounds_on_moments}
  \E |X_{11}^{(n)}|^4 = o(n)
  \quad\text{and}\quad
  \E |X_{11}^{(n)}|^3 = o(n^{1/2}).
\end{align}

  If $K_n=\mathcal{O}(1)$, the result follows from \cite[Theorem 2.11 and
  Proposition 2.12]{InfoNoise}. In fact one can check that the proof given
  there works for $K_n$ being a small power of $n$.

  However, to get larger range of $K_n$ we will adapt the proof by Chatterjee
  \cite{ChatterjeeLindeberg}, who obtained the semi-circular law for symmetric
  random matrices with exchangeable entries, by using the Cauchy-Stieltjes
  trace-resolvent transform, and a generalization of the Lindeberg principle
  for the Central Limit Theorem.

  For a probability measure $\mu$ supported on $\R$, we denote by $m_\mu$ its
  Cauchy-Stieltjes transform, defined for $\xi \in \C_+=\{z\in\C:\Im(z)>0\}$ as
  (see Theorem 2.4.4 in \cite{AGZ})
  \begin{displaymath}
    m_\mu(\xi) = \int_\R \frac{1}{\lambda - \xi} \mu(d\lambda).
  \end{displaymath}
  If $M$ is a $n\times n$ matrix with real spectrum and $\xi\in\C_+$, then
  $m_{\nu_M}(\xi)$ is the normalized trace of the resolvent
  $G=(M-\xi\id_n)^{-1}$ of $M$ at point $\xi$, namely
  \[
  m_{\nu_M}(\xi)
  =\frac{1}{n}\sum_{k=1}^n(\lambda_k(M)-\xi)^{-1}
  =\frac{1}{n}\tr(G).
  \]

  Fix $z \in \C$. We start by the usual linearization trick, which consists in
  constructing a Hermitian matrix, depending linearly on $A^{(n)}-z\id_n$, and
  for which the eigenvalues are the singular values of $A^{(n)}-z\id_n$.
  Namely, we consider the $2n \times 2n$ Hermitian matrix
  \[
  B_n = \begin{pmatrix}
    0 & A^{(n)} - z\id_n\\
    (A^{(n)} - z\id_n)^* & 0
  \end{pmatrix}.
  \]
  The eigenvalues of $B_n$ are the singular values of $A^{(n)} - z\id$
  multiplied with $\pm 1$. More precisely, if $\lambda > 0$ is a singular
  value of $A^{(n)} - z\id_n$ of multiplicity $m$, then $\pm \lambda$ are both
  eigenvalues of $B_n$ of multiplicity $m$, and if zero is a singular value of
  $A^{(n)} - z\id_n$ of multiplicity $m$ then it is also an eigenvalue of
  $B_n$ of multiplicity $2m$. Thus, to prove the proposition it is enough to
  show that the spectral measure of $B_n$ converges weakly in probability to
  the same measure as the spectral measure of the matrix
  \begin{displaymath}
    \tilde{D}_n = \left(\begin{array}{cc}
        0 & \frac{1}{\sqrt{n}}G_n - z\id_n\\
        (\frac{1}{\sqrt{n}}G_n - z\id_n)^* & 0
      \end{array}\right),
  \end{displaymath}
  where $G_n=(g_{ij})_{1\le i,j\le n}$ is an $n\times n$ matrix with i.i.d.
  $\mathcal{N}(0,1)$ coefficients, as it is known \cite{BS,BCAround} that
  $\nu_{\sqrt{(n^{-1/2}G_n - z\id_n)^\ast(n^{-1/2}G_n - z\id_n)}}$ converges to the measure $\nu_z$ satisfying (jj).

  Let $b = n^{-2}\sum_{1\le i,j\le n} g_{ij}$ and let $\Ind{n}\otimes\Ind{n}$
  be the $n\times n$ matrix with all elements equal to one. Define now the
  matrix
  \[
  D_n = \begin{pmatrix}
    0 & \frac{1}{\sqrt{n}}G_n - b\Ind{n}\otimes\Ind{n}- z\id_n \\
    (\frac{1}{\sqrt{n}}G_n - b\Ind{n}\otimes\Ind{n}- z\id_n)^* & 0
  \end{pmatrix}.
  \]
  Since $D_n$ is a rank two additive perturbation of $\tilde{D}_n$, it is well
  known by using interlacing inequalities \cite{lamabook} that the Kolmogorov
  distance between $\nu_{\tilde{D}_n}$ and $\nu_{D_n}$ is at most $2/n$. Thus
  it is enough to prove that $\nu_{B_n} - \nu_{D_n} \to 0$ weakly in
  probability, which will follow if we prove that for each $\xi \in \C_+$,
  \begin{displaymath}
    m_{\nu_{B_n}}(\xi)  - m_{\nu_{D_n}}(\xi) \to 0
  \end{displaymath}
  in probability as $n \to \infty$. Since $m_{\nu_{D_n}}(\xi)$ converges in
  probability to the deterministic quantity $m_{\nu_z}(\xi)$, it is enough to
  show that for every smooth function $g$ with compact support we have
  \begin{displaymath}
    |\E g(\Re m_{\nu_{B_n}}(\xi))  - \E g(\Re m_{\nu_{D_n}}(\xi))| \to 0
  \end{displaymath}
  and that an analogous statement holds for the imaginary parts of
  $m_{\nu_{B_n}}$ and $m_{\nu_{D_n}}$.

  To this end we will use the following Theorem, which is \cite[Theorem
  1.2]{ChatterjeeLindeberg}.

  \begin{theorem}[Chatterjee-Lindeberg principle]\label{th:Chatterjee}
    Suppose $X$ is a random vector in $\R^n$ with exchangeable components of finite fourth moments and
    let $Z$ be a standard Gaussian vector in $\R^n$, independent of $X$. Define
    \[
    \hat{\mu} = \frac{1}{n}\sum_{i=1}^n X_i,
    \quad
    \hat{\sigma}^2 = \frac{1}{n}\sum_{i=1}^n (X_i-\hat{\mu})^2,
    \quad
    \bar{Z} = \frac{1}{n}\sum_{i=1}^n {Z_i},
    \quad
    Y_i = \hat{\mu} + \hat{\sigma}(Z_i-\bar{Z}).
    \]
    Let $f\colon \R^n \to \R$ be a $\mathcal{C}^3$ function, and let $L_r'(f)$
    be a uniform bound on all $r$-th partial derivatives of $f$, including
    mixed partials. For each $p$, let $w_p = \E|X_1-\hat{\mu}|^p$. Then
    \begin{displaymath}
      |\E f(X) - \E f(Y)| \le 9.5w_4^{1/2}L_2'(f)n^{1/2} + 13w_3L_3'(f)n.
    \end{displaymath}
  \end{theorem}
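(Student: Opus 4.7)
My plan is to prove the bound by first reducing to a conditional statement and then running a Lindeberg-style replacement argument adapted to exchangeable (hence non-independent) vectors. I would condition on the multiset $\{X_1,\ldots,X_n\}$; under this conditioning $\hat\mu$, $\hat\sigma$, and all the moments $w_p$ become deterministic, $X$ is a uniform random permutation of fixed reals $x_1,\ldots,x_n$, and $Y$ is obtained from the independent Gaussian $Z$ via $Y_i = \hat\mu + \hat\sigma(Z_i-\bar Z)$. It then suffices to prove the inequality conditionally and take expectation at the end. In this conditioning, both vectors satisfy $\sum_i X_i = \sum_i Y_i = n\hat\mu$, and the conditional covariance matrices almost agree: $\mathrm{Var}(X_i)=\hat\sigma^2$ and $\mathrm{Cov}(X_i,X_j) = -\hat\sigma^2/(n-1)$ for $i\ne j$, versus $\mathrm{Var}(Y_i)=\hat\sigma^2(n-1)/n$ and $\mathrm{Cov}(Y_i,Y_j) = -\hat\sigma^2/n$, so the discrepancy per coordinate pair is only of order $\hat\sigma^2/n$.

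Next I would construct a hybrid interpolation path from $X$ to $Y$ along which $f$ can be Taylor-expanded. Because any single-coordinate swap breaks the linear constraint $\sum_i X_i = n\hat\mu$, the swap must act on at least two coordinates at once. A natural scheme is: pick a uniformly random ordering of $[n]$ and at step $k$ replace a pair $(X_{i_k}, X_{j_k})$ by $(Y_{i_k}, Y_{j_k})$, compensating the residual in the sum by an adjustment spread over the still-untouched coordinates so that each intermediate vector $W^{(k)}$ stays on the constraint hyperplane. For each step write
\[
f(W^{(k)}) - f(W^{(k-1)}) = \langle \nabla f(W^{(k-1)}), \Delta_k\rangle + \tfrac12 \langle\nabla^2 f(W^{(k-1)})\Delta_k,\Delta_k\rangle + R_k,
\]
with $\Delta_k = W^{(k)}-W^{(k-1)}$ and $|R_k|\le L_3'(f)\|\Delta_k\|_1^3/6$. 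By the symmetry of the swap, $\E\langle \nabla f(W^{(k-1)}), \Delta_k\rangle = 0$; the expected second-order term reduces to a pairing of $\nabla^2 f$ with the covariance mismatch computed above. Summing over the roughly $n$ steps and applying Cauchy-Schwarz together with $\E(X_1-\hat\mu)^4 \le w_4$ yields the $9.5\,w_4^{1/2}L_2'(f)n^{1/2}$ piece, while the cubic remainders aggregate, via $\E\|\Delta_k\|_1^3\lesssim w_3$, to the $13\,w_3 L_3'(f)n$ piece.

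The main obstacle is designing the replacement scheme so that the symmetry-based cancellations are actually clean. In the classical i.i.d.\ Lindeberg argument single-coordinate replacement works because independence of the remaining coordinates lets one integrate them out trivially; here any local change propagates through the fixed-sum (and near-fixed variance) constraints to every other coordinate. One must arrange the swap so that (i) the expected first-order term vanishes exactly by exchangeability of both $X$ and $Y$; (ii) the expected second-order term is controlled only by the small covariance mismatch $O(\hat\sigma^2/n)$ rather than by the full second moment, which is what yields the improved $n^{1/2}$ rate; and (iii) the cumulative cubic remainder is still of order $n\cdot w_3$. Coupling the pairwise swap to the Gaussian construction in a way that simultaneously satisfies (i)--(iii), and then tracking the numerical constants to get the explicit $9.5$ and $13$, is the technical core of the argument.
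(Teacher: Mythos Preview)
The paper does not prove this theorem at all: it is quoted verbatim as \cite[Theorem~1.2]{ChatterjeeLindeberg} and used as a black box in the proof of Theorem~\ref{th:shifted_matrix}. There is therefore no ``paper's own proof'' to compare your attempt against.

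As for the proposal itself, the opening move---conditioning on the multiset $\{X_1,\dots,X_n\}$ so that $\hat\mu,\hat\sigma$ become deterministic and $X$ becomes a uniform random permutation of fixed reals---is exactly how Chatterjee's original argument begins, and your covariance computations for $X$ and $Y$ are correct. However, the replacement mechanism you describe (swap a pair $(X_{i_k},X_{j_k})\mapsto(Y_{i_k},Y_{j_k})$ and then redistribute the sum defect over the untouched coordinates) is not the route Chatterjee takes, and as written it is not clear it can be made to work: the compensating adjustment you spread over the remaining coordinates introduces additional nonzero contributions to both the second- and third-order terms at every step, and these do not obviously sum to something of the right order. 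You also have not explained why $\E\langle\nabla f(W^{(k-1)}),\Delta_k\rangle=0$; exchangeability of $X$ and of $Y$ separately does not automatically give a symmetry that kills this term once part of the vector is $X$-type and part is $Y$-type. Chatterjee's actual proof replaces one coordinate at a time and handles the dependence by exploiting that, conditionally on the other coordinates, the distribution of the replaced coordinate is explicit; the matching of first and second moments is what drives the cancellation, and no auxiliary ``adjustment'' vector is needed. If you want to reconstruct the argument, it is cleaner to follow that one-at-a-time scheme rather than the pairwise-with-compensation scheme you outline.
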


  It is easy to check that for any matrix valued differentiable function $M$
  of $x \in \R$ with values in the space of Hermitian matrices and $G(x) =
  (M(x) - \xi\id)^{-1}$ ($\xi \in \C_+$), we have
  \begin{align}\label{eq:matrix_differentiation}
    \frac{d}{dx} G = -G\frac{dM}{dx}G.
  \end{align}

  Consider now for $x \in \R^{n\times n}$ the $2n\times 2n$ matrix $M(x) =
  (M_{ij}(x))_{1\le i,j\le 2n}$, where $M_{ij}(x) = 0 $ if $i,j \le n$ or $i,j
  > n$, $M_{ij}(x) = \frac{1}{\sqrt{n}}x_{i,j-n} - z\ind{i=j}$ for $i \le n, j>
  n$ and $M_{ij}(x) = \frac{1}{\sqrt{n}}x_{i-n,j} - \bar{z}\ind{i=j}$ for $i > n,
  j\le n$.

  As in \cite{ChatterjeeLindeberg} we will write $\partial_{\alpha}$ for
  $\partial/\partial x_{\alpha}$ for $\alpha = (i,j)$, $1\le i,j \le n$. From
  \eqref{eq:matrix_differentiation} we get
  \begin{displaymath}
    \partial_{\alpha} \frac{1}{2n}\tr G = -\frac{1}{2n}\tr \Big(G(\partial_{\alpha} M)G\Big).
  \end{displaymath}
  Differentiating two more times and using again
  \eqref{eq:matrix_differentiation} together with the observation that for any
  $\alpha_1,\alpha_2$, $\partial_{\alpha_1}\partial_{\alpha_2} M = 0$, we get
  \begin{align*}
    \partial_{\alpha_1}\partial_{\alpha_2} \frac{1}{2n}\tr G &= \frac{1}{2n}\sum_{1\le i\neq j\le 2}\tr \Big(G(\partial_{\alpha_i} M)G(\partial_{\alpha_j}M)G\Big),\\
    \partial_{\alpha_1}\partial_{\alpha_2}\partial_{\alpha_3}\frac{1}{2n}\tr G &= - \frac{1}{2n}\sum_{1\le i\neq j\neq k \le 3}\tr \Big(G(\partial_{\alpha_i} M)G(\partial_{\alpha_j}M)G(\partial_{\alpha_k} M)G\Big).
  \end{align*}

  Note that $\partial_{\alpha} M$ has two nonzero entries, each equal to
  $n^{-1/2}$. Moreover $\|G\| \le (\Im \xi)^{-1}$. Thus
  \begin{displaymath}
    \Big|\partial_{\alpha} \frac{1}{2n}\tr G\Big| = \Big|\frac{1}{2n}\tr \Big((\partial_{\alpha} M)G^2\Big)\Big| \le \frac{1}{n^{3/2}}(\Im \xi)^{-2}.
  \end{displaymath}

  As for higher order derivatives, using the inequalities $|\tr AB |\le
  \|A\|_{\mathrm{HS}}\|B\|_{\mathrm{HS}}$ and $\|AB\|_{\mathrm{HS}} \le \|A\|
  \|B\|_{\mathrm{HS}}$, we get
  \begin{align*}
    |\partial_{\alpha_1}\partial_{\alpha_2}\frac{1}{2n}\tr G|&\le \frac{1}{2n}\sum_{1\le i\neq j\le 2}\|G\partial_{\alpha_i} M\|_{\mathrm{HS}}\|G(\partial_{\alpha_j}M)G\|_{\mathrm{HS}}\\
    & \le \frac{1}{2n}\sum_{1\le i\neq j\le 2}\|G\|^3\|\partial_{\alpha_i}M\|_{\mathrm{HS}}\|\partial_{\alpha_j} M\|_{\mathrm{HS}}\\
    &\le \frac{2}{n^{2}}(\Im \xi)^{-3}
  \end{align*}
  and similarly
  \begin{align*}
    |\partial_{\alpha_1}\partial_{\alpha_2}\partial_{\alpha_3} \frac{1}{2n}\tr G| \le \frac{6\sqrt{2}}{n}\|G\|^4 n^{-3/2} \le 9n^{-5/2}(\Im \xi)^{-4}.
  \end{align*}

  Moreover, since $x_{ij}$ are real, we have $\partial _\alpha \Re G = \Re
  (\partial_\alpha G)$ and a similar equality for the imaginary parts.
  Using this together with the above estimates on the derivatives, we get that
  for any smooth function $g$ on $\R$, with compact support, the function $f\colon \R^{n^2}\to \R$, defined as $f(x)
  = g(\frac{1}{2n}\Re \tr G(x))$, satisfies
  \begin{displaymath}
    L_2'(f) \le C_{g,\xi}n^{-2}, L_3'(f) \le C_{g,\xi}n^{-5/2}.
  \end{displaymath}

  Thus, by Theorem \ref{th:Chatterjee} (applied with $n^2$ instead of $n$) and \eqref{eq:bounds_on_moments},
  \begin{displaymath}
    |\E g(\Re m_{\nu_{B_n}}(\xi)) - \E g(\Re m_{\nu_{D_n}}(\xi))| \le C_{g,\xi}(n^{-1}(\E |X_{11}^{(n)}|^4)^{1/2} + n^{-1/2}\E |X_{11}^{(n)}|^3)\to 0
  \end{displaymath}
  as $n \to \infty$. Since an analogous convergence holds for the
  imaginary parts, this ends the proof of Theorem \ref{th:shifted_matrix}.
\end{proof}

\begin{remark}[Assumptions] One can see from the above proof that the theorem
  remains true under an assumption weaker than $K_n = o(n^{1/2})$, since it
  is enough to assume \eqref{eq:bounds_on_moments}.
\end{remark}

\section{Combinatorial Talagrand's concentration inequality}
\label{se:Talagrand}

The following result is a combinatorial analogue of the Talagrand
concentration of measure inequality for convex functions under product
measures. It plays a crucial role in our approach, and may be of independent
interest. We give a proof for completeness.

\begin{theorem}[Concentration inequality for convex
  functions]\label{th:convex_concentration}
  Let $x_1,\ldots,x_n \in [0,1]$ and let $\varphi\colon [0,1]^n \to \R$ be an
  $L$-Lipschitz convex function. Let $\pi$ be a random uniform permutation of
  the set $[n]$ and let $Z = \varphi(x_{\pi(1)},\ldots,x_{\pi(n)})$. Then for
  all $t>0$,
\begin{align}\label{eq:convex_concentration}
\p(|Z-\E Z| \ge t) \le 2\exp(-ct^2/L^2).
\end{align}
\end{theorem}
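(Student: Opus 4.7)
The plan is to transplant Talagrand's convex concentration strategy from the product-measure setting to the uniform measure on $S_n$. The central ingredient is a combinatorial version of Talagrand's convex distance inequality: for $A \subset S_n$, set
\[
d_T(\sigma, A) := \sup_{\alpha \in \R_+^n,\ \|\alpha\|_2 \le 1}\ \inf_{\tau \in A}\ \sum_{i=1}^n \alpha_i \ind{\sigma(i) \neq \tau(i)},
\]
and I will establish
\[
\p\bigl(d_T(\pi, A) \ge t\bigr)\,\p(A) \le \exp(-c t^2).
\]
This will be proved by induction on $n$, conditioning on the value $\pi(n)$. Conditionally on $\pi(n)=k$, the restriction of $\pi$ to $[n-1]$ is uniform on bijections $[n-1] \to [n]\setminus\{k\}$, so the induction hypothesis applies on each fibre; the fibres are then glued by transposition couplings that transport witness probability measures on $A$ between different values of $k$. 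This is the main technical step, directly analogous to Talagrand's original inductive proof.

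Given this convex distance inequality, the upper tail will reduce to a short convexity plus Lipschitz computation. Let $m$ be a median of $Z$ and $A_m := \{\sigma : \varphi(x_{\sigma(\cdot)}) \le m\}$, so $\p(A_m) \ge 1/2$. By Sion's minimax theorem, $d_T(\sigma, A_m)$ equals the infimum of $\|\alpha(\nu)\|_2$ over probability measures $\nu$ on $A_m$, where $\alpha(\nu)_i := \int \ind{\sigma(i) \neq \tau(i)}\,d\nu(\tau)$. For such a $\nu$, the barycentre $y_\nu := \int (x_{\tau(1)},\ldots,x_{\tau(n)})\,d\nu(\tau)$ lies in $[0,1]^n$, and $x_j \in [0,1]$ forces $|x_{\sigma(i)} - (y_\nu)_i| \le \alpha(\nu)_i$, hence $\|x_{\sigma(\cdot)} - y_\nu\|_2 \le \|\alpha(\nu)\|_2$. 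Convexity of $\varphi$ gives $\varphi(y_\nu) \le \int \varphi(x_{\tau(\cdot)})\,d\nu \le m$, and the $L$-Lipschitz property gives $\varphi(x_{\sigma(\cdot)}) \le m + L\|\alpha(\nu)\|_2$. Infimising over $\nu$ yields $\varphi(x_{\sigma(\cdot)}) \le m + L\,d_T(\sigma, A_m)$, which combined with the convex distance inequality produces $\p(Z \ge m + t) \le 2 e^{-c t^2/L^2}$.

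For the lower tail, I apply the convex distance inequality instead to $A_- := \{\sigma : \varphi(x_{\sigma(\cdot)}) \le m - t\}$: the identical computation shows that every $\sigma$ with $\varphi(x_{\sigma(\cdot)}) \ge m$ satisfies $d_T(\sigma, A_-) \ge t/L$, so
\[
\tfrac{1}{2} \le \p(Z \ge m) \le \p\bigl(d_T(\pi, A_-) \ge t/L\bigr) \le \p(A_-)^{-1}\, e^{-c t^2/L^2},
\]
giving $\p(Z \le m - t) \le 2 e^{-c t^2/L^2}$. Integrating both tails yields $|\E Z - m| \le CL$, so replacing $m$ by $\E Z$ only changes the constants and produces \eqref{eq:convex_concentration}. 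The main obstacle is the combinatorial convex distance inequality itself: in contrast to the product-measure setting, where coordinates can be resampled independently, the fibres $\{\pi(n) = k\}$ live over distinct ground sets $[n]\setminus\{k\}$, so the inter-fibre coupling must be implemented through transpositions and the witness measures on $A$ must be carefully propagated through the induction.
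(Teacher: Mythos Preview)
Your proposal is correct and follows essentially the same route as the paper: both reduce the statement to Talagrand's convex distance inequality on $S_n$, then derive upper and lower tails around the median via the convexity/Lipschitz argument applied to the barycentre of a witness measure on $A$, and finally pass from median to mean. The only substantive difference is that the paper simply \emph{cites} the convex distance inequality on $S_n$ as a known result of Talagrand (their Theorem~\ref{th:convex_distance}, stated in the exponential-moment form $\int_{S_n} \exp(\tfrac{1}{16} f(A,\pi))\,dP_n \le 1/P_n(A)$), whereas you propose to \emph{reprove} it by induction on $n$ via conditioning on $\pi(n)$ and transposition couplings---which is in fact how Talagrand originally established it. So your plan is self-contained where the paper is not, at the cost of the extra work in the induction step you flag as the main obstacle; the rest of the argument (Sion/convex-hull duality, the barycentre estimate $|x_{\sigma(i)} - (y_\nu)_i| \le \alpha(\nu)_i$ from $x_j\in[0,1]$, and the lower-tail trick with $A_-$) matches the paper's proof almost line for line.
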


\begin{proof}[Proof of Theorem \ref{th:convex_concentration}]
  The randomness of $Z$ comes entirely from $\pi$. Let $P_n$ be the uniform
  measure on the symmetric group $S_n$. The proof follows closely the original
  argument given by Talagrand in the case of product measures
  \cite{TalNewLook} and is based on the following theorem.
  \begin{theorem}[Talagrand \cite{TalConcMeasure}]\label{th:convex_distance}
    For $\pi\in S_n$ and $A\subseteq S_n$ define
    \begin{displaymath}
      U_A(\pi) = %
      \{s\in\{0,1\}^n\colon %
      \exists_{\tau\in A} \forall_{l\le n} s_l = 0 \implies \tau(l) = \pi(l)\}.
    \end{displaymath}
    Let $V_A(\pi) = \mathrm{ConvexHull}(U_A(\pi))$ and $f(A,\pi) =
    \inf\{|s|^2\colon s\in V_A(\pi)\}$. Then
    \begin{displaymath}
      \int_{S_n} \exp\Big(\frac{1}{16}f(A,\pi)\Big)d P_n(\pi) \le \frac{1}{P_n(A)}.
    \end{displaymath}
  \end{theorem}

  In what follows, for $\tau \in S_n$ and $x \in \R^n$, we will denote $x_\tau
  = (x_{\tau(1)},\ldots, x_{\tau(n)})$. It is well known that up to universal
  constants it is enough to prove \eqref{eq:convex_concentration} with the
  median instead of the mean \cite{LedouxConcBook}. Let $Z(\tau) =
  \varphi(x_\tau)$ and $A = \{\tau\in S_n\colon Z(\tau) \le \Med Z\}$. We have
  $P_n(A) \ge 1/2$, so by Theorem \ref{th:convex_distance} and Chebyshev's
  inequality we get
    \begin{displaymath}
      P_n(\pi\colon f(A,\pi) \ge t^2) \le 2\exp(-ct^2).
    \end{displaymath}
    If $f(A,\pi) < t^2$, then there exist $s^{1},\ldots,s^{m} \in U_A(\pi)$, $p_1,\ldots,p_m \ge 0$, $p_1+\cdots+p_m = 1$, such that
    \begin{displaymath}
      \Big|\sum_{i=1}^m s^ip_i\Big| < t.
    \end{displaymath}
    Let $\tau_i \in A$ be such that $s^i_j = 0 \implies \tau_i(j) = \pi(j)$. We have for $j=1,\ldots,n$,
    \begin{align*}
      \Big|x_{\pi(j)} - \sum_{i=1}^m x_{\tau_i(j)} p_i\Big|\le \sum_{i=1}^m|x_{\pi(j)} - x_{\tau_i(j)}|p_i \le 2\sum_{i=1}^m s^i_j p_i,
    \end{align*}
    so
    \begin{displaymath}
      \Big|x_\pi - \sum_{i=1}^m p_i x_{\tau_i}\Big| \le 2\Big|\sum_{i=1}^m s^ip_i\Big| < 2t.
    \end{displaymath}
    Moreover, by convexity $\varphi(\sum_i p_i x_{\tau_i}) \le \sum_{i=1}^m
    p_i\varphi(x_{\tau_i}) \le \Med Z$ and so by the Lipschitz condition
    $\varphi(x_\pi) <\Med Z + 2Lt$. This shows that
    \[
    P_n(\pi\colon \varphi(x_\pi) \ge \Med Z + 2Lt) \le P_n(\pi\colon f(A,\pi) \ge
    t^2) \le 2\exp(-ct^2).
    \]
    To prove the bound on the lower tail let us now denote
    \[
    A = \{\tau\in S_n\colon Z(\tau) \le \Med Z-2Lt\}.
    \]
    By Theorem \ref{th:convex_distance} we have
    \begin{displaymath}
      \int_{S_n} \exp\Big(cf(A,\pi)\Big) d P_n(\pi) \le \frac{1}{P_n(A)}
    \end{displaymath}
    But if $f(A,\pi) < t^2$, then by a similar argument as above
    $\varphi(x_\pi) < \Med Z$, so the left hand side above is bounded from
    below by $\frac{1}{2}\exp(ct^2)$, which gives $P_n(A) \le 2\exp(-ct^2)$.
\end{proof}

Using integration by parts and the triangle inequality in $L_p$, we obtain the
following.

\begin{corollary}[Moments]\label{co:convex_moments} In the setting of
  Theorem \ref{th:convex_concentration} we have for any $p\ge 2$,
  \begin{displaymath}
    \|Z\|_p \le \|Z\|_1 + CL\sqrt{p}.
  \end{displaymath}
\end{corollary}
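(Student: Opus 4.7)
The plan is to bound the centered moment $\|Z-\E Z\|_p$ using the sub-Gaussian tail from Theorem \ref{th:convex_concentration}, and then recover the stated inequality from the triangle inequality in $L_p$. Concretely, since $\E Z$ is deterministic, $\|\E Z\|_p = |\E Z| \le \E|Z| = \|Z\|_1$, so
\[
\|Z\|_p \le \|Z-\E Z\|_p + |\E Z| \le \|Z-\E Z\|_p + \|Z\|_1,
\]
and it suffices to show $\|Z-\E Z\|_p \le CL\sqrt{p}$.

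For the centered moment, I would apply the layer-cake formula (integration by parts):
\[
\E|Z-\E Z|^p = \int_0^\infty p t^{p-1}\,\p(|Z-\E Z|\ge t)\,dt.
\]
Plugging in the bound $\p(|Z-\E Z|\ge t)\le 2\exp(-ct^2/L^2)$ from Theorem \ref{th:convex_concentration} and making the change of variables $u=ct^2/L^2$ reduces the right-hand side to a Gamma integral, yielding
\[
\E|Z-\E Z|^p \le 2p\,c^{-p/2}L^p\int_0^\infty u^{p/2-1}e^{-u}\,du/2 = p\,c^{-p/2}L^p\,\Gamma(p/2).
\]
Taking $p$-th roots and applying Stirling's bound $\Gamma(p/2)^{1/p}\le C\sqrt{p}$ (together with $p^{1/p}\le 2$ for $p\ge 2$) gives $\|Z-\E Z\|_p \le C L\sqrt{p}$, after adjusting constants.

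Combining this with the first display completes the proof. No serious obstacle is expected: the argument is the standard passage from a sub-Gaussian concentration bound to moment control, and the only mildly delicate point is absorbing the constants from the Gamma integral and Stirling's formula into the single constant $C$ appearing in the statement.
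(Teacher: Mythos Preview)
Your proof is correct and follows exactly the approach indicated by the paper, which simply states that the corollary follows from integration by parts and the triangle inequality in $L_p$. You have carried out precisely these two steps in detail.
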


\section{Proof of Theorem \ref{th:ssv}\label{se:ssv} (smallest
  singular value)}

Throughout this section, $X^{(n)}$ is as in \eqref{eq:model}. We will prove
the following more precise version of Theorem \ref{th:ssv}, which is actually
used in the proof of Theorem \ref{th:circular_law}.

\begin{theorem}\label{th:ssv_detailed}
  Under the assumptions of Theorem \ref{th:ssv}, for any $\varepsilon \in
  (0,1)$,
  \begin{multline*}
    \p(s_n(X^{(n)}-\sqrt{n}z\id) \le \frac{1}{K_n+|z|}\varepsilon n^{-1/2}) \\
    \le CK_n^2(K_n+|z|)\log(1+K_n+|z|)\varepsilon \\
    + C \frac{K_n^4(K_n+|z|)\log^{3/2}(1+K_n+|z|)}{n^{1/2}}.
\end{multline*}
\end{theorem}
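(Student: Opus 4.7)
The plan is to follow the Rudelson--Vershynin ``soft approach'' for the smallest singular value, adapted to the shuffled model \eqref{eq:model}. Write $M = X^{(n)} - \sqrt{n}z\id$, and decompose the complex unit sphere $S_\C^{n-1}$ into sets $\COMP(\delta,\rho)$ and $\INCOMP(\delta,\rho)$ of compressible and incompressible vectors, with parameters $\delta, \rho$ chosen as small polynomial functions of $1/(K_n+|z|)$. Then $s_n(M) = \inf_{x \in S_\C^{n-1}}\|Mx\|$ is estimated by bounding the infimum separately over each piece.

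For the compressible part, I would fix a unit vector $y \in S_\C^{n-1}$ and consider the function $\varphi(u) = \|X(u)y\|$ of a vector $u \in \R^{n^2}$, where $X(u)$ is the matrix whose entries are listed by $u$. Then $\varphi$ is convex and $1$-Lipschitz with respect to the Euclidean norm. Viewing the entries of $\bfx^{(n)}$ as elements of $[-K_n,K_n]^{n^2}$ and rescaling to $[0,1]$ (which costs a factor $2K_n$ in the Lipschitz constant), Theorem \ref{th:convex_concentration} applied to the random uniform permutation $\pi_n$ on $[n^2]$ yields
\[
\p\bigl(\bigl|\|My\|-\E\|My\|\bigr|\ge t\sqrt n\bigr)\le 2\exp(-ct^2 n/K_n^2).
\]
A direct computation based on the covariance identity \eqref{eq:xmeancov} gives $\E\|My\|^2\ge cn$ when $|z|$ is controlled by a polynomial in $K_n$. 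Combining this with the operator norm bound $\|M\|\le C(K_n+|z|)\sqrt n$ (Lemma \ref{le:operator_norm}) and a net argument on $\COMP(\delta,\rho)$, whose cardinality is of order $\exp(C\delta n\log((K_n+|z|)/(\delta\rho)))$, the choice $\delta\sim 1/(K_n^2(K_n+|z|))$ absorbs the net size against the concentration tail and gives $\p(\inf_{x\in\COMP}\|Mx\|\le c\rho\sqrt n)\le e^{-cn}$, which is negligible compared with the target.

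For the incompressible part, I would use the standard Rudelson--Vershynin distance lemma to reduce the problem to small-ball estimates for distances from columns to hyperplanes:
\[
\p\Bigl(\inf_{x\in\INCOMP}\|Mx\|\le\tfrac{\rho\varepsilon}{\sqrt n}\Bigr)\le\frac{1}{\delta n}\sum_{k=1}^n\p\bigl(\DIST(C_k(M),H_k)\le\varepsilon\bigr),
\]
where $C_k(M)$ is the $k$-th column of $M$ and $H_k=\SPAN\{C_j(M):j\ne k\}$. Letting $v=v(H_k)\in H_k^\perp$ be a unit normal, $\DIST(C_k,H_k)=|\sum_i v_i X_{ik}^{(n)}-\sqrt n z v_k|$ is a linear statistic in the entries of column $k$. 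Conditioning on $H_k$ and on the unordered multiset $\mathcal V_k$ of values populating column $k$, the entries $(X_{ik})_{i=1}^n$ become a uniform random permutation of $\mathcal V_k$, so a combinatorial Berry--Esseen bound of Bolthausen type gives
\[
\p\bigl(|\langle v,C_k\rangle-\sqrt n z v_k|\le\varepsilon\bigm|H_k,\mathcal V_k\bigr)\le C(\varepsilon+n^{-1/2})/\sigma,
\]
with $\sigma$ the standard deviation of the linear statistic under the conditional distribution. Incompressibility of $v$ (which ensures a linear-in-$n$ number of coordinates of size $\sim 1/\sqrt n$), together with concentration of the empirical variance of $\mathcal V_k$ around $1$, again via Theorem \ref{th:convex_concentration}, yields $\sigma\ge c$ outside an exceptional set of probability $O(n^{-1/2})$, producing the claimed bound after averaging.

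The main obstacle is the handling of the dependence intrinsic to the exchangeable model. The multiset $\mathcal V_k$ is itself random and strongly coupled with $H_k$, so the conditional distribution of column $k$ given $H_k$ is not a fresh permutation of deterministic data; one must decompose the randomness of $\pi_n$ into an assignment of entries of $\bfx^{(n)}$ to columns and a within-column shuffle, and invoke the combinatorial CLT only on the inner randomness after conditioning on enough of the outer randomness to fix $H_k$ and $\mathcal V_k$. Verifying that $\sigma$ stays uniformly bounded below outside an event of polynomially small probability, while tracking the polynomial dependence on $K_n$ and $|z|$, is the most delicate step; it is also where the prefactors $K_n^2(K_n+|z|)\log(1+K_n+|z|)$ and $K_n^4(K_n+|z|)\log^{3/2}(1+K_n+|z|)$ in Theorem \ref{th:ssv_detailed} come from.
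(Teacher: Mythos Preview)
Your overall architecture matches the paper's: split into compressible and incompressible vectors, handle the compressible piece by concentration plus a net, and handle the incompressible piece via the Rudelson--Vershynin distance lemma followed by a combinatorial Berry--Esseen bound applied conditionally. The conditioning issue you flag in your last paragraph is real and is resolved in the paper essentially as you sketch.

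However, there is a genuine gap in your treatment of the incompressible part. You write that ``incompressibility of $v$ \ldots\ yields $\sigma\ge c$''. This is where the exchangeable model differs fundamentally from the i.i.d.\ model. In the permutation setting, the variance of the linear statistic $\sum_{i} v_i x_{\tau(i)}$ (with $\tau$ a uniform permutation and $\sum_i x_i = 0$, $\sum_i x_i^2 = n$) is, up to constants,
\[
\sigma^2 \;\asymp\; \sum_i v_i^2 - \frac{1}{n}\Bigl(\sum_i v_i\Bigr)^2,
\]
which is the squared distance of $v$ to the line spanned by the constant vector $\Ind{}$, \emph{not} any quantity that incompressibility controls. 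The constant unit vector $n^{-1/2}\Ind{}$ is perfectly incompressible, yet gives $\sigma=0$ (and indeed $\sum_i v_i x_{\tau(i)}=0$ deterministically). So incompressibility of the normal vector $\eta$ does \emph{not} by itself give the lower bound on $\sigma$ that you need.

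The paper closes this gap with an additional net argument over a second ``bad'' set of unit vectors,
\[
\mathcal G(\theta,r)\;=\;\bigcup_{\#I\ge(1-\theta)n}\Bigl\{x\in S_\C^{n-1}:\ \exists\,|\xi|\le 1,\ \bigl|x_{|I}-\tfrac{\xi}{\sqrt{\#I}}\Ind{I}\bigr|\le r\Bigr\},
\]
i.e.\ vectors that are close to a constant on a large index set. One shows, by applying (the first part of) Lemma~\ref{le:compressible} to the \emph{adjoint} $(B')^\ast$ of the matrix with the last column removed, together with a covering of $\mathcal G(\theta,r)$ and Lemma~\ref{le:operator_norm}, that with probability at least $1-\exp(-cn/K^2)$ the normal vector $\eta$ lies \emph{outside} $\mathcal G(\theta,r)$ for $r\asymp (K+|z|)^{-1}$ and $\theta\asymp (K^2\log(1+K+|z|))^{-1}$. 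Only then, after also restricting to the index set $I=\{i:|\eta_i|\le(\theta n)^{-1/2}\}$ (to control the ratio $L$ in the Berry--Esseen bound \eqref{eq:BE_bound}) and conditioning on $\mathcal F=\sigma(\mathcal F',(\pi(i,n))_{i\notin I})$, does one obtain $\sigma\ge cr$. This extra step is precisely the source of the factors of $K$ and $\log(1+K+|z|)$ in the final statement; without it your argument cannot conclude.

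Two smaller points. First, in the compressible step the paper gets $\E|X_{|I}x-\sqrt n z x_{|I}|^2\ge \#I/3$ for \emph{every} unit $x$, with no restriction on $|z|$: the cross terms with $z$ vanish because $\E X_{ij}=0$, so your caveat ``when $|z|$ is controlled by a polynomial in $K_n$'' is unnecessary. Second, the paper's choice is $\delta\asymp(K^2\log(1+K+|z|))^{-1}$, not $\delta\asymp(K_n^2(K_n+|z|))^{-1}$ as you wrote; the logarithm appears because the net on $\COMP(\delta,\rho)$ has size $(C/(\rho^2\delta))^{\delta n}$ and must be absorbed by $\exp(-cn/K^2)$.
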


Let us first introduce some tools we will need in the proof of Theorem
\ref{th:ssv_detailed}. One of our main tools is a version of Talagrand's
concentration inequality for convex functions stated in Theorem
\ref{th:convex_concentration}, and Corollary \ref{co:convex_moments}. The next
two lemmas are rather standard corollaries. We denote by
$S_\C^{n-1}=\{z\in\C^n:|z|=1\}$ the unit Euclidean ball in $\C^n$.

\begin{lemma}[Operator norm]\label{le:operator_norm}
  For all $t > 0$ with probability at least $1 - \exp(-ct^2/K_n^2)$,
  \begin{displaymath}
    \|X^{(n)}\| \le CK_n\sqrt{n} + t.
  \end{displaymath}
\end{lemma}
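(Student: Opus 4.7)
The plan is to run a standard $\varepsilon$-net argument, feeding each individual bilinear form into the combinatorial Talagrand inequality just proved as Theorem~\ref{th:convex_concentration}.

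First, fix real unit vectors $u,v\in S^{n-1}$. Since $X^{(n)}$ is a real matrix, its operator norm is attained on real vectors and one may work over $\R^n$. Write
\[
\langle X^{(n)} u,v\rangle=\sum_{\alpha\in[n]\times[n]}\bfx^{(n)}_{\pi_n(\alpha)}\,w_\alpha,\qquad w_{(i,j)}:=v_iu_j,
\]
so that $\sum_\alpha w_\alpha^2=1$. Regarded as a function of the $n^2$-vector of permuted entries, this bilinear form is linear, hence convex, and $1$-Lipschitz with respect to the Euclidean norm. After the affine rescaling $\bfx^{(n)}_\alpha\mapsto(\bfx^{(n)}_\alpha+K_n)/(2K_n)\in[0,1]$, which costs a factor $2K_n$ in the Lipschitz constant, one is in the framework of Theorem~\ref{th:convex_concentration} (with $n^2$ in place of $n$). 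Since $\E\langle X^{(n)}u,v\rangle=\sum_{ij}v_iu_j\E X^{(n)}_{ij}=0$ by (A1), this yields
\[
\p\bigl(|\langle X^{(n)}u,v\rangle|\ge s\bigr)\le 2\exp\bigl(-cs^2/K_n^2\bigr),\qquad s>0.
\]

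Next, pick a $\tfrac12$-net $\mathcal N\subset S^{n-1}$ with $|\mathcal N|\le 5^n$. By the standard net approximation, $\|X^{(n)}\|\le 2\max_{u,v\in\mathcal N}|\langle X^{(n)}u,v\rangle|$. A union bound over the at most $25^n$ pairs $(u,v)\in\mathcal N\times\mathcal N$ gives
\[
\p\bigl(\|X^{(n)}\|\ge 2s\bigr)\le 2\cdot 25^n\exp\bigl(-cs^2/K_n^2\bigr).
\]
Writing $2s=CK_n\sqrt n+t$, using $(CK_n\sqrt n+t)^2\ge C^2K_n^2n+t^2$, and choosing the universal constant $C$ large enough so that $2\cdot 25^n\exp(-cC^2n/4)\le 1$ delivers the announced bound
\[
\p\bigl(\|X^{(n)}\|\ge CK_n\sqrt n+t\bigr)\le\exp(-ct^2/K_n^2).
\]

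The only mildly nontrivial point is the rescaling of the entries to $[0,1]^{n^2}$, required by the hypothesis of Theorem~\ref{th:convex_concentration}; this is what produces the factor $K_n$ in the exponent (and would get worse if one tried to use larger input ranges). Everything else is textbook. A conceptual alternative would be to apply Theorem~\ref{th:convex_concentration} directly to the convex $1$-Lipschitz function $M\mapsto\|M\|$ to get concentration around $\E\|X^{(n)}\|$, and then bound $\E\|X^{(n)}\|\le CK_n\sqrt n$ separately by the same net argument combined with Corollary~\ref{co:convex_moments}; the two routes give essentially the same estimate.
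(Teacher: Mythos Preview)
Your proof is correct and follows essentially the same net-plus-concentration route as the paper; the paper in fact takes the small detour you describe as the ``conceptual alternative'' (bound $\E\|X^{(n)}\|$ by integrating the net estimate and then apply Theorem~\ref{th:convex_concentration} once more to $M\mapsto\|M\|$), whereas you extract the final bound directly from the algebra $(CK_n\sqrt n+t)^2\ge C^2K_n^2n+t^2$. One minor slip: with a $\tfrac12$-net the standard approximation only yields $\|X\|\le 4\max_{u,v\in\mathcal N}|\langle Xu,v\rangle|$ (the bound $1/(1-2\varepsilon)$ degenerates at $\varepsilon=1/2$); take a $\tfrac14$-net of size $9^n$ as the paper does, or just carry the harmless extra constant.
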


\begin{proof}[Proof of Lemma \ref{le:operator_norm}]
  Let us consider a $(1/4)$-net $\mathcal{N}$ in $S_\C^{n-1}$ of cardinality
  $9^{2n}$ (it exists by standard volumetric estimates). We have
  \begin{displaymath}
    \|X^{(n)}\| \le \frac{16}{7}\sup_{x,y\in\mathcal{N}} |\langle X^{(n)}x,y\rangle|.
  \end{displaymath}
  Note that $\E \langle X^{(n)}x,y\rangle = 0$. Since for $x,y\in S_\C^{n-1}$, $A \mapsto |\langle
  Ax,y\rangle|$ is a 1-Lipschitz function with respect to the Hilbert-Schmidt
  norm, we get by Theorem \ref{th:convex_concentration} and the union bound
  \begin{displaymath}
    \forall t\geq CK_n,\quad
    \p(\|X^{(n)}\| \ge  t\sqrt{n})
    \le  2\cdot 9^{4n} \exp(-ct^2 n/K_n^2) \le \exp(-c t^2 n/K_n^2).
  \end{displaymath}
  Integrating the above inequality by parts, we get $\E \|X^{(n)}\| \le
  CK_n\sqrt{n}$. The lemma now follows by another application of Theorem
  \ref{th:convex_concentration}, this time to the function $X\mapsto \|X\|$.
\end{proof}

Let $\SPARSE(\delta)=\{x\in\C^n:\sum_{k=1}^n\mathbf{1}_{x_k\neq0}\leq\delta
n\}$ be the set of $\delta n$ sparse vectors in $\C^{n}$. Similarly as in \cite{RV} we partition the
unit sphere $S_\C^{n-1}$ into compressible vectors and incompressible vectors:
\[
\COMP(\delta,\rho) %
= \{x\in S_\C^{n-1}\colon \DIST(x,\SPARSE(\delta)) \le \rho\} %
\quad\text{and}\quad \INCOMP(\delta,\varepsilon) %
= S_\C^{n-1} \setminus \COMP(\delta,\varepsilon).
\]

If $M={(M_{ij})}_{1\leq i\leq n,1\leq j\leq k}$ and $I\subseteq [n]$, then we denote by
$M_{|I}$ the submatrix ${(M_{ij})}_{i\in I,1\leq j\leq k}$ of $M$ formed by
the rows of $M$ labeled by $I$.

\begin{lemma}[Compressible vectors]\label{le:compressible}
  For all $\alpha \in (0,1]$, for all $I \subseteq [n]$, such that
  $\#I \ge \alpha n$, for every $z \in \C$, if $n \ge C_{\alpha}K_n^2$, then
  \begin{itemize}
  \item[a)] for any $x \in S_{\C}^{n-1}$, with probability at least $1 -
    \exp(c_{\alpha} n/K_n^2)$,
    \begin{displaymath}
      |X^{(n)}_{|I} x -\sqrt{n}zx_{|I}| \ge c_{\alpha} \sqrt{n},
    \end{displaymath}
  \item[b)] with probability at least
    $1 - \exp(-c_{\alpha} n/K_n^2)$, for all $x \in \COMP(\delta,\rho)$,
    \begin{align*}
      |X^{(n)}_{|I} x -\sqrt{n}zx_{|I}| \ge c_{\alpha} \sqrt{n},
    \end{align*}
    where
    \begin{align}\label{eq:delta_and_rho}
      \rho = \frac{c_\alpha'}{K_n+|z|}
      \quad\text{and}\quad
      \delta = \frac{c_\alpha''}{K_n^2\log(1+K_n+|z|)}.
    \end{align}
  \end{itemize}
  \end{lemma}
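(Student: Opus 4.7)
The plan is to follow the standard Rudelson--Vershynin soft approach, split into a fixed-vector bound giving (a) and an $\varepsilon$-net argument giving (b), with the two pieces tied together through Theorem \ref{th:convex_concentration} and Lemma \ref{le:operator_norm}.

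For (a), I would fix $x \in S_\C^{n-1}$ and set $Y := X^{(n)}_{|I}x - \sqrt{n}zx_{|I}$. First I would estimate $\E|Y|^2$ using \eqref{eq:xmeancov}: since $\E X^{(n)}_{|I}x = 0$ the cross term vanishes, and an elementary row-by-row computation using $\E X^{(n)}_{ij}X^{(n)}_{ik} = -1/(n^2-1)$ for $j\neq k$ gives
\[
\E\Big|\sum_j X^{(n)}_{ij}x_j\Big|^2 = 1 + \frac{1-|\sum_j x_j|^2}{n^2-1} \ge \frac{1}{2},
\]
hence $\E|X^{(n)}_{|I}x|^2 \ge |I|/2 \ge \alpha n/2$ and so $\E|Y|^2 \ge \alpha n/2$. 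Next I would apply Theorem \ref{th:convex_concentration} to $|Y|$, which is a convex function of the shuffled matrix that is $1$-Lipschitz in the Hilbert--Schmidt norm; after an affine rescaling of the entries of $\bfx^{(n)}$ from $[-K_n,K_n]$ into $[0,1]$ the effective Lipschitz constant becomes of order $K_n$, producing
\[
\p\bigl(\bigl||Y|-\Med|Y|\bigr|>t\bigr) \le 2\exp(-ct^2/K_n^2).
\]
Combined with the elementary inequality $\E|Y|^2 \le 2(\Med|Y|)^2 + CK_n^2$, this forces $\Med|Y| \ge c_\alpha\sqrt{n}$ whenever $n \ge C_\alpha K_n^2$; choosing $t = \Med|Y|/2$ then yields (a) with the advertised probability.

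For (b), I would use a net argument. Every $x \in \COMP(\delta,\rho)$ is within distance $\rho$ of some $\delta n$-sparse vector in the closed unit ball of $\C^n$, so a $\rho$-net $\mathcal{N}$ of the latter set of cardinality $|\mathcal{N}| \le \binom{n}{\lceil\delta n\rceil}(C/\rho)^{2\lceil\delta n\rceil}$ serves as a $2\rho$-net for $\COMP(\delta,\rho)$. For the scalings in \eqref{eq:delta_and_rho} with $c_\alpha''$ small, a direct computation gives $\log|\mathcal{N}| \le c_\alpha n/(2K_n^2)$. Applying part (a) at each $y\in\mathcal{N}$ and taking a union bound produces an event of probability at least $1-\exp(-c_\alpha n/(2K_n^2))$ on which $|X^{(n)}_{|I}y-\sqrt{n}zy_{|I}| \ge c_\alpha\sqrt{n}$ for every $y\in\mathcal{N}$. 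Intersecting with the event $\{\|X^{(n)}\|\le CK_n\sqrt{n}\}$ from Lemma \ref{le:operator_norm}, for any $x\in\COMP(\delta,\rho)$ I would pick $y\in\mathcal{N}$ with $|x-y|\le 2\rho$ and conclude
\[
|X^{(n)}_{|I}x-\sqrt{n}zx_{|I}| \ge c_\alpha\sqrt{n} - 2\rho\bigl(\|X^{(n)}\|+\sqrt{n}|z|\bigr) \ge \sqrt{n}\bigl(c_\alpha - 2\rho(CK_n+|z|)\bigr),
\]
which is at least $c_\alpha\sqrt{n}/2$ for $\rho = c_\alpha'/(K_n+|z|)$ with $c_\alpha'$ small enough.

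The main obstacle is balancing the covering-number bound for $\COMP(\delta,\rho)$ against the Talagrand concentration rate $\exp(-cn/K_n^2)$. This constraint dictates the precise scalings in \eqref{eq:delta_and_rho}: the dominant term $2\delta n\log(1/\rho) \sim \delta n\log(K_n+|z|)$ in $\log|\mathcal{N}|$ must be absorbed by the concentration exponent $cn/K_n^2$, which is exactly why $\delta$ carries the factor $1/\log(1+K_n+|z|)$. A secondary technical subtlety is that Theorem \ref{th:convex_concentration} is stated for entries in $[0,1]$, so one must carefully track constants under rescaling to the range $[-K_n,K_n]$ of the actual entries of $\bfx^{(n)}$.
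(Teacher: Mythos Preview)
Your proposal is correct and follows essentially the same approach as the paper's proof. The only cosmetic difference is that you pass through the median of $|Y|$ (via $\E|Y|^2 \le 2(\Med|Y|)^2 + CK_n^2$) whereas the paper passes through the mean using Corollary~\ref{co:convex_moments} ($\E|Y| \ge \sqrt{\E|Y|^2} - CK_n$); both are standard consequences of the same sub-Gaussian concentration and lead to the same conclusion.
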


\begin{proof}[Proof of Lemma \ref{le:compressible}]
  We abridge $K_n$ and $X^{(n)}$ into $X$ and $K$. Consider first an arbitrary
  $x \in S_\C^{n-1}$. Using \eqref{eq:xmeancov} and $|x|_1 := |x_1|+\ldots+|x_n| \le \sqrt{n}$, we
  get
  \begin{align*}
    \E |X_{|I} x - \sqrt{n}zx_{|I}|^2 %
    &= \sum_{i\in I} \E\Big|\sum_{j=1}^n X_{ij}x_j-\sqrt{n}zx_i\Big|^2\\
    & \ge \sum_{i\in I} \E \Big|\sum_{j=1}^n X_{ij}x_j\Big|^2\\
    &= \sum_{i\in I}\sum_{j=1}^n\E X_{ij}^2|x_j|^2 %
    +\sum_{i\in I}\sum_{1\le j\neq k\le n} \E X_{ij}X_{ik}x_j\bar{x}_k \\
    &\ge\# I -\frac{\#I}{n^2-1}|x|_1^2 \ge \#I/3.
  \end{align*}
  Since the function $X\mapsto |X_{|I}x-zx_{|I}|$ is convex and $1$-Lipschitz
  with respect to the Hilbert-Schmidt norm, by Corollary
  \ref{co:convex_moments} and the assumption on $I$ we get, for $n \ge
  C_{\alpha}K^2$,
  \begin{displaymath}
    \E|X_{|I} x - \sqrt{n}zx_{|I}| %
    \ge c\sqrt{\alpha n} - C K \ge c_\alpha \sqrt{n}.
  \end{displaymath}
  By Theorem \ref{th:convex_concentration} we get
  \begin{align}\label{eq:individual_vector_lower_bound}
    \p(|X_{|I} x - \sqrt{n}zx_{|I}| \le c_{\alpha}\sqrt{n} \Big) \le
    2\exp(-c_{\alpha} n/K^2),
  \end{align}
  which proves the first part of the lemma. Now, for each $\delta,\rho \in
  (0,1]$, the set $\SPARSE(\delta)\cap S_\C^{n-1}$ admits a $\rho$-net of
  cardinality
  \begin{displaymath}
    \binom{n}{\lfloor \delta n\rfloor }(3/\rho)^{2\delta n} \le \Big(\frac{C}{\rho^2 \delta}\Big)^{\delta n}.
  \end{displaymath}
  Thus for $\rho = \frac{c_\alpha'}{K+|z|}$ and $\delta =
  \frac{c_\alpha''}{K^2\log(1+K+|z|)}$ by the union bound, the inequality
  \eqref{eq:individual_vector_lower_bound}, Lemma \ref{le:operator_norm}, the
  estimate $K \ge 1$, and the triangle inequality, we get with probability at
  least
  \begin{displaymath}
    1 - \exp(-c_{\alpha} n/K^2),
  \end{displaymath}
  for all $x \in \COMP(\delta,\rho)$,
  \begin{displaymath}
    |X_{|I} x - \sqrt{n}zx_{|I}| %
    \ge c_\alpha\sqrt{n} - C\rho (CK+|z|)\sqrt{n} %
    \ge \frac{1}{2}c_\alpha \sqrt{n}.
  \end{displaymath}
\end{proof}

The final ingredient is a Berry-Esseen type estimate for linear combinations
of exchangeable random variables.

\begin{lemma}[Berry-Esseen type estimate for exchangeable
  variables]\label{le:CLT_exchangeable}
  Consider two sequences of real numbers $x = (x_1,\ldots,x_n)$ and
  $a=(a_1,\ldots,a_n)$, such that for some constants $K,L >0$,
  \begin{itemize}
  \item $\sum_{i=1}^n x_i = 0$, $\sum_{i=1}^n x_i^2 = n$, $|x_i| \le K$,
  \item $|a_i| \le \frac{L|a|}{\sqrt{n}}$, $a \neq \pm n^{-1/2}(|a|,\ldots,|a|)$.
  \end{itemize}
  Let $\pi$ be a random (uniform) permutation of $[n]$ and define
  \begin{displaymath}
    W := \sum_{i=1}^n a_i x_{\pi(i)}.
  \end{displaymath}
  Then $\sigma^2:= \E W^2 = \frac{1}{n-1}\left(n\sum_{i=1}^n a_i^2 -
    (\sum_{i=1}^n a_i)^2\right) > 0$ and
  \begin{equation}\label{eq:BE_bound}
    \sup_{t\in \R}%
    \Big|\p(W \le t) -
    \frac{1}{\sqrt{2\pi}\sigma}\int_{-\infty}^te^{-x^2/2\sigma^2}dx\Big|
    \le \frac{34LK|a|}{\sigma \sqrt{n}}.
  \end{equation}
\end{lemma}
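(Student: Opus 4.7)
The plan is to reduce the statement to Hoeffding's combinatorial central limit theorem and then invoke a Bolthausen-type Berry-Esseen bound for doubly centered arrays.

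First I would compute $\sigma^2 = \E W^2$ directly. Exchangeability of $(x_{\pi(i)})_{i=1}^n$ under the uniform permutation gives $\E x_{\pi(i)} = 0$, $\E x_{\pi(i)}^2 = 1$, and for $i \neq j$ that $\E x_{\pi(i)}x_{\pi(j)} = -1/(n-1)$; the last identity comes from expanding $0 = (\sum_k x_k)^2 = \sum_k x_k^2 + 2\sum_{k<l}x_k x_l = n + 2\sum_{k<l}x_k x_l$. A direct expansion of $\E W^2 = \sum_{i,j}a_i a_j \E x_{\pi(i)} x_{\pi(j)}$ then yields the claimed formula. Setting $\bar a = n^{-1}\sum_i a_i$ and $b_i = a_i - \bar a$, one can rewrite $\sigma^2 = \frac{n}{n-1}\sum_i b_i^2$, and the hypothesis $a \neq \pm n^{-1/2}(|a|,\ldots,|a|)$ excludes exactly the case that all the $a_i$ are equal, so $\sigma > 0$.

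Second, I would reduce $W$ to Hoeffding's combinatorial form. Since $\sum_j x_j = 0$ the term $\bar a \sum_i x_{\pi(i)}$ vanishes and $W = \sum_i b_i x_{\pi(i)}$. Writing $c_{i,j} = b_i x_j$ gives $W = \sum_i c_{i,\pi(i)}$ with a doubly centered array: $\sum_i c_{i,j} = x_j\sum_i b_i = 0$ and $\sum_j c_{i,j} = b_i\sum_j x_j = 0$. I would then apply the sharp Berry-Esseen bound of Bolthausen for doubly centered arrays, which yields, for an absolute constant $C$,
\[
\sup_t \bigl|\p(W/\sigma \le t) - \Phi(t)\bigr| \le \frac{C}{n\sigma^3}\sum_{i,j}|c_{i,j}|^3,
\]
and then factor the right-hand side as $\sum_{i,j}|c_{i,j}|^3 = \bigl(\sum_i|b_i|^3\bigr)\bigl(\sum_j|x_j|^3\bigr)$. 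The estimate $\sum_j|x_j|^3 \le K\sum_j x_j^2 = Kn$, together with the bound $\max_i|b_i| \le 2L|a|/\sqrt n$ (using $|\bar a|\le |a|/\sqrt n$ from Cauchy-Schwarz), yields $\sum_i |b_i|^3 \le (2L|a|/\sqrt n)\sum_i b_i^2$. Combining these with $\sum_i b_i^2 = \frac{n-1}{n}\sigma^2 \le \sigma^2$ collapses the bound to the desired order $LK|a|/(\sigma\sqrt n)$.

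The real difficulty is not in any individual step, all of which are standard, but in producing the sharp numerical constant $34$. This requires invoking an explicit quantitative version of the combinatorial CLT, such as Bolthausen's original bound or later refinements (Ho-Chen, Goldstein-Rinott, Bolthausen-G\"otze), with constants tracked carefully through the Stein-method proof. Once such a version is in hand the rest of the argument is essentially bookkeeping.
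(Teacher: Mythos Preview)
Your approach is correct and follows essentially the same route as the paper: reduce to Hoeffding's combinatorial CLT and invoke a quantitative Berry--Esseen bound for permutation statistics. The one noteworthy difference is in the choice of quantitative input. You pre-center $a$ and appeal to Bolthausen's $L^3$ bound $C\sigma^{-3}n^{-1}\sum_{i,j}|c_{ij}|^3$, then admit that extracting the explicit constant $34$ requires tracking constants through a Stein-method argument. The paper instead sets $c_{ij}=a_i x_j$ directly (no pre-centering) and cites the $L^\infty$-type bound from Chen--Goldstein--Shao, \emph{Normal Approximation by Stein's Method}, namely
\[
\sup_t\bigl|\p(W/\sigma\le t)-\Phi(t)\bigr|\le \frac{16.3\,A}{\sigma},\qquad A=\max_{i,j}\Bigl|c_{ij}-\tfrac1n\sum_k c_{ik}-\tfrac1n\sum_k c_{kj}+\tfrac{1}{n^2}\sum_{k,l}c_{kl}\Bigr|.
\]
Since the row averages of $c_{ij}=a_ix_j$ vanish and the column averages are $x_j\bar a$, one finds $A\le K\cdot\max_i|a_i-\bar a|\le 2KL|a|/\sqrt n$ (using $L\ge1$ and $|\bar a|\le|a|/\sqrt n$), so $16.3A/\sigma\le 32.6\,KL|a|/(\sigma\sqrt n)$, and the constant $34$ drops out immediately. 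In short, your Bolthausen route is perfectly valid and of the same order, but the paper's choice of a max-based bound with an explicit numerical constant makes the final step pure arithmetic rather than bookkeeping through Stein's method.
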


\begin{proof}[Proof of Lemma \ref{le:CLT_exchangeable}]
  The result follows from the combinatorial Central Limit Theorem (CLT) \cite[equation (4.105) and Theorem 6.1]{MR2732624}. Namely, let
  $(c_{ij})_{i,j \le n}$ be an array of real numbers and let
  \[
  W = \sum_{i=1}^n c_{i\pi(i)},
  \]
  where $\pi$ is a random (uniform) permutation of $[n]$. Then
  \begin{displaymath}
    \sigma^2 := \Var W = \frac{1}{n-1}\sum_{i,j=1}^n \Big(c_{ij} - \frac{1}{n}\sum_{k=1}^n c_{ik} - \frac{1}{n}\sum_{k=1}^n c_{kj} + \frac{1}{n^2}\sum_{k,l=1}^n c_{kl}\Big)^2.
  \end{displaymath}
  Moreover, if $\sigma > 0$, then for every $\varepsilon > 0$,
  \begin{displaymath}
    \sup_{t \in R}%
    \left|%
      \p(\sigma^{-1}(W - \frac{1}{n}\sum_{i,j=1}^n c_{ij}) \le t)%
      - \frac{1}{\sqrt{2\pi}}\int_{-\infty}^t\exp(-x^2/2)dx)%
      \right|
      \le \frac{16.3A}{\sigma},
  \end{displaymath}
  where
  \begin{displaymath}
  A = \max_{1\le i,j\le n}\Big|c_{ij} - \frac{1}{n}\sum_{k=1}^n c_{ik} - \frac{1}{n}\sum_{k=1}^n c_{kj} + \frac{1}{n^2}\sum_{k,l=1}^n c_{kl}\Big|.
  \end{displaymath}
  The lemma follows by setting $c_{ij} = a_i x_j$ and elementary calculations.
\end{proof}

\begin{remark}[Conditional expectation] \label{re:conditional} Before we
  proceed with the proof let us make a comment concerning conditional
  expectation. Namely, in the proof of Theorem \ref{th:ssv_detailed} as well
  as in other proofs in the article we will encounter a situation in which a
  deterministic set $E \subset [n]\times [n]$ is given and in addition one
  considers a random set $F \subseteq ([n]\times [n]) \setminus E$, measurable
  with respect to $\mathcal{F}'=\sigma((\pi(i,j))_{(i,j)\in E})$. One can then
  consider the $\sigma$-field $\mathcal{F} =
  \sigma(\mathcal{F}',(\pi(i,j))_{(i,j)\in F})$ and the random vector $\pi' =
  (\pi(i,j))_{(i,j)\in (E\cup F)^c}$ (this notation is slightly informal as
  the set $F$ itself is random, but this should not lead to misunderstanding).
  One can then see that conditionally on $\mathcal{F}$, $(\pi(i,j))_{(i,j)\in
    (E\cup F)^c}$ is distributed as a random (uniform) bijection from the set
  $(E\cup F)^c$ to the set $([n]\times [n])\setminus \{\pi(i,j)\colon (i,j)\in
  E \cup F\}$. In particular the vector $(X_{ij})_{(i,j)\in (E\cup F)^c}$ is
  distributed as a random permutation of the sequence obtained from
  $(\bfx_{ij})_{i,j=1}^n$ after removing the elements $(X_{ij})_{(i,j)\in
    E\cup F}$ and one can apply to it conditionally e.g.\ Theorem
  \ref{th:convex_concentration} or Lemma \ref{le:CLT_exchangeable}.
\end{remark}

\begin{proof}[Proof of Theorem \ref{th:ssv_detailed}] Again we will write $X$ instead of $X^{(n)}$ and $K$ instead of $K_n$. We may and will assume
  that
  \begin{align}\label{eq:assumption_on_n}
    n \ge CK^2\log(1+K+|z|)
  \end{align}
  (otherwise the bounds of the theorem become trivial as the right-hand side
  exceeds one).

Let $B = X - \sqrt{n}z\id$ and $s_n = s_n(B)$. Since $s_n = \inf_{x\in S_\C^{n-1}} |Bx|$, it is enough to bound from below
the quantities
\begin{displaymath}
\beta = \inf_{x\in \COMP(\delta,\rho)} |Bx|, \quad \zeta = \inf_{x\in \INCOMP(\delta,\rho)} |Bx|
\end{displaymath}
for some $\delta = \delta_{K,z}$, $\rho = \rho_{K,z}$. By Lemma \ref{le:compressible} for
 \begin{align}\label{eq:def_rho_delta}
\rho = \frac{c}{K+|z|}\;{\rm  and}\;  \delta = \frac{c}{K^2\log(1+K+|z|)},
\end{align}
 we have for $n \ge CK^2$,
\begin{align}\label{eq:bound_on_beta}
\beta \ge c\sqrt{n}
\end{align}
with probability at least $1 - \exp(-cn/K^2)$.
By Lemma 3.5 from \cite{RV},
\begin{displaymath}
\p(\zeta \le \rho \varepsilon n^{-1/2}) \le \frac{1}{\delta n}\sum_{i=1}^n \p(\DIST(X_i - z\sqrt{n}e_i,H_i)\le \varepsilon),
\end{displaymath}
where $X_i$'s are the columns of $X$ and $H_i = {\rm span}(X_k\colon k\neq i)$.

Let now $\eta^{i}$ be a random (unit) normal to $H_i$ defined as a measurable function of $(X_1, \ldots, X_{i-1}, X_{i+1}, \ldots, X_n)$.
We have $\DIST(X_i - z\sqrt{n} e_i, H_i) \ge |\langle X_i - z\sqrt{n} e_i,\eta^i\rangle|$,
so by exchangeability of the entries of $X$,
\begin{align}\label{eq:small_ball_wanted}
\p(\zeta \le \rho\varepsilon n^{-1/2}) \le \frac{1}{\delta} \p(|\langle X_n,\eta^n\rangle -z\sqrt{n}\langle e_n,\eta^n\rangle |\le \varepsilon).
\end{align}

To estimate the right hand side we will use Lemma~\ref{le:CLT_exchangeable}.
To be able to apply it, we have to prove certain properties of the vector
$\eta = \eta^n$. Let us define for $\theta,r \in [0,1]$,
\begin{displaymath}
\mathcal{G}(\theta,r) = \bigcup_{I \subseteq [n], \#I \ge (1-\theta)n} \mathcal{G}_I(\theta,r),
\end{displaymath}
where
\begin{displaymath}
\mathcal{G}_{I}(\theta,r) = \{x\in S_\C^{n-1}\colon \exists_{\xi \in \C, |\xi|\le 1} |x_{|I} - \frac{\xi}{\sqrt{\#I}}\Ind{I}|\le r\}
\end{displaymath}
(here $\Ind{I} = (u_i)_{i=1}^{\# I}$ with $u_i = 1$ for all $i$). 
We will show that with high probability $\eta\notin \mathcal{G}(\theta, r)$
for certain $r = r_{K,z}$ and $\theta = \theta_{K,z}$.
Note that if $\#I \ge (1-\theta)n$, then $\mathcal{G}_I(\theta,r)$ admits a $3r$-net $\mathcal{N}_I$, such that
\begin{displaymath}
\#\mathcal{N}_I \le \frac{C}{r^2}\Big(\frac{3}{r}\Big)^{2\theta n}.
\end{displaymath}
Indeed, for any $x \in \mathcal{G}_I(\theta,r)$, $x_{|I}$ can be approximated up to $2r$ by one of the vectors $\frac{\xi}{\sqrt{\#I}}\Ind{I}$, where $\xi$ comes from an $r$-net in the unit disk in $\C$, whereas $x_{|I^c}$ can be approximated by a vector from an $r$-net in the unit ball of $\C^{I^c}$.
Thus the set $\mathcal{G}(\theta,r)$ admits a $3r$ net $\mathcal{N}$ of cardinality
\begin{displaymath}
\#\mathcal{N} \le \sum_{k=0}^{\lfloor \theta n\rfloor} \binom{n}{k}\frac{C}{r^2}\Big(\frac{3}{r}\Big)^{2\theta n} \le \frac{C}{r^2}\Big(\frac{en}{\lfloor \theta n\rfloor }\Big)^{\theta n} \Big(\frac{3}{r}\Big)^{2\theta n} \le \exp\Big(Cn\theta \log\Big(\frac{2}{\theta r}\Big)\Big)
\end{displaymath}
for $n\ge 1/\theta$. Let $B'' = (B')^\ast$, where $B'$ is the matrix obtained by removing the last column of $B$. By exchangeability, $B''$ has the same distribution as $(X-\bar{z}\sqrt{n}\id)_{|\{1,\ldots,n-1\}}$ and so by the first part of Lemma \ref{le:compressible}, the union bound and Lemma \ref{le:operator_norm} we get with probability at least $1 - \exp(-cn/K^2)$,
\begin{displaymath}
\inf_{x\in \mathcal{G}(\theta, r)}|B'' x| \ge c\sqrt{n}
\end{displaymath}
for
\begin{align}\label{eq:r_and_theta}
r = \frac{c}{K+|z|}, \theta = \frac{c}{K^2\log(1+K+|z|)}
 \end{align}
(note that \eqref{eq:assumption_on_n} implies that $n \ge 1/\theta$). We may and will assume that $\theta \le 1/2$. Combining the above estimate with the equality $B''\eta = 0$, we get that with probability at least $1 - \exp(-c n/K^2)$,
\begin{align}\label{eq:not_in_G}
\eta \notin \mathcal{G}(\theta,r).
\end{align}

Now let $I = \{i\in [n]\colon |\eta_i|\le \frac{1}{\sqrt{\theta n}}\}$ and
note that by Markov's inequality $\# I \ge n (1-\theta)$. Let $\mathcal{F}'$
be the $\sigma$-field generated by $(\pi(i,j))_{1\le i\le n,1\le j\le n-1}$.
In particular $\eta$ is $\mathcal{F}'$-measurable. Let also $\mathcal{F} =
\sigma(\mathcal{F}',\{\pi(i,n)\}_{i\notin I})$.

Note that $\sum_{i\in I} X_{in}$ and $\sum_{i\in I} X_{in}^2$ are $\mathcal{F}$-measurable. We will use this fact together with Remark \ref{re:conditional} to estimate the conditional distribution
\begin{displaymath}
\p(|\langle X_n,\eta\rangle -Y |\le \varepsilon|\mathcal{F}),
\end{displaymath}
where $Y$ is any $\mathcal{F}$-measurable random variable, by another random
variable measurable with respect to $\mathcal{F}$. To this end we will use
Lemma \ref{le:CLT_exchangeable}.

Let us define $\hat{\mu} = \frac{1}{\# I}\sum_{i\in I} X_{in}$, $\hat{\Sigma}^2 = \frac{1}{\# I}\sum_{i\in I}(X_{in} - \hat{\mu})^2$.

Let $\gamma$ be a small constant which will be fixed later on. Note that by
Theorem \ref{th:convex_concentration} we have
\begin{align}\label{eq:conditional_1}
\p(|\sum_{i=1}^n X_{in}| \ge \gamma n) \le 2\exp(- c\gamma^2 n/K^2).
\end{align}
Now, by exchangeability of $X_{in}, i =1,\ldots,n$ with respect to $\p(\cdot|\mathcal{F'})$, we get that on the set $\Delta_1 = \{|\sum_{i=1}^n X_{in}| \le \gamma n \} \in \mathcal{F'}$,
\begin{displaymath}
|\E(\sum_{i\in I} X_{in}|\mathcal{F'})| = |\frac{\#I}{n}\sum_{i=1}^n X_{in}| \le \gamma n.
\end{displaymath}
Thus, again by Theorem \ref{th:convex_concentration} we get on $\Delta_1$,
\begin{align}\label{eq:conditional_2}
\p(|\sum_{i\in I} X_{in}| \ge 2\gamma n|\mathcal{F'}) \le 2\exp(-c \gamma^2 n^2 /(K^2\#I )) \le 2\exp(-c \gamma^2 n/K^2).
\end{align}
Let now $\Delta_2 = \{|\hat{\mu}| \le 4\gamma \} \in \mathcal{F}$. Combining \eqref{eq:conditional_1}, \eqref{eq:conditional_2} and the inequality $\# I \ge (1-\theta)n \ge n/2$, we get
\begin{align}\label{eq:bound_on_Delta_2}
\p(\Delta_2) \ge 1 - 4\exp(-c \gamma^2 n/K^2).
\end{align}
Now, by Corollary \ref{co:convex_moments} applied to $Z = \sqrt{\sum_{i=1}^n
  X_{in}^2}$, for $n \ge 4C^2K^2$ we get
\begin{displaymath}
\sqrt{n} = \sqrt{\E\sum_{i=1}^n X_{in}^2}\ge \E\sqrt{\sum_{i=1}^n X_{in}^2}\ge \sqrt{n} - CK \ge \frac{1}{2}\sqrt{n}
\end{displaymath}
and by Theorem \ref{th:convex_concentration} the set $\Delta_3 = \{Cn\ge
\sum_{i=1}^n X_{in}^2 \ge cn\}\in \mathcal{F}'$ satisfies
\begin{align}\label{eq:bound_on_Delta_3}
\p(\Delta_3) \ge 1 - 2\exp(-c n/K^2).
\end{align}
On $\Delta_3$, $\E(\sum_{i\in I} X_{in}^2 |\mathcal{F}') \ge c(1-\theta) n$
and by Corollary \ref{co:convex_moments},
\begin{displaymath}
\E(\sqrt{\sum_{i\in I} X_{in}^2 }|\mathcal{F}') \ge \sqrt{c(1-\theta)n} - CK \ge c\sqrt{n},
\end{displaymath}
for $n \ge C'K^2$, so yet another application of Theorem
\ref{th:convex_concentration} yields that on $\Delta_3$,
\begin{displaymath}
\p(\sum_{i\in I} X_{in}^2 \le c n|\mathcal{F}') \le 2\exp(-c n/K^2).
\end{displaymath}
Combining this with \eqref{eq:bound_on_Delta_3} we get
\begin{align}\label{eq:sum_of_squares}
\p(Cn\ge \sum_{i\in I} X_{in}^2 \ge c n) \ge 1 - 2\exp(-cn/K^2).
\end{align}
Since $\hat{\Sigma}^2 = \frac{1}{\#I}\sum_{i\in I} X_{in}^2 - \hat{\mu}^2$, by the above inequality and \eqref{eq:bound_on_Delta_2} (applied with a sufficiently small universal constant $\gamma$ adjusted to the constant $c$ in \eqref{eq:sum_of_squares}) we get
\begin{displaymath}
\p(\hat{\Sigma}^2 \ge c ) \ge 1 - 2\exp(-c n/K^2).
\end{displaymath}

Let
\begin{displaymath}
\Delta = \{\sum_{i\in I}X_{ni}^2\le Cn, \hat{\Sigma}^2 \ge c , \hat{\mu} \le 1\;\textrm{and}\; \eta \notin \mathcal{G}(\theta,r)) \in \mathcal{F}.
\end{displaymath}
 By the above inequality together with \eqref{eq:not_in_G},\eqref{eq:bound_on_Delta_2},\eqref{eq:sum_of_squares} we have
\begin{align}\label{eq:bound_on_Delta}
\p(\Delta) \ge 1 - 2\exp(-cn/K^2).
\end{align}
On $\Delta$ we have ${\rm dist}(\eta_{|I},\{\frac{\xi}{\sqrt{\#I}} \Ind{I}\colon \xi \in \C, |\xi|\le 1\}) \ge r$. Let $\eta'$ and $\eta''$ be resp. the real and imaginary part of $\eta$. Thus at least one of the vectors $\eta'_{|I},\eta''_{|I}$ is at distance at least $r/\sqrt{2}$ from $\{\xi \Ind{I}\colon \xi \in \R\})$. Indeed,  otherwise we would have $|\eta'_{|I} - \frac{\xi'}{\sqrt{\#I}}\Ind{I}|,  |\eta''_{|I} - \frac{\xi''}{\sqrt{\#I}}\Ind{I}| \le r/\sqrt{2}$ for some $\xi',\xi'' \in \R$, and so
\begin{displaymath}
|\eta_{|I} - \frac{\xi'+i\xi''}{\sqrt{\#I}}\Ind{I}| \le r,
\end{displaymath}
which implies that ${\rm dist}(\eta_{|I},y) \le r$, where $y$ is the orthogonal projection of $\eta_{|I}$ onto ${\rm span}(\Ind{I})$. Since $|\eta_{|I}|\le 1$ we have $|y|\le 1$, so $y = \frac{\xi}{\sqrt{\# I}}\Ind{I}$ for some $\xi$ with $|\xi| \le 1$, which gives a contradiction.

We will consider the case
\begin{align}\label{eq:assumption_on_eta_prime}
{\rm dist}(\eta'_{|I},\{\xi \Ind{I}\colon \xi \in \R\}) \ge r/\sqrt{2},
\end{align}
the other one is analogous.

Assume now that for some $\mathcal{F}$-measurable complex random variable $Y = Y' + iY''$, $|\langle \eta,X_n\rangle - Y|\le \varepsilon$. Then $|\langle \eta',X_n\rangle - Y'|\le \varepsilon$. Define $x_i = (X_{in} - \hat{\mu})\hat{\Sigma}^{-1}$ for $i \in I$.

Since $\eta'$, $X_{in}, i\notin I$ and $\hat{\mu}$ are $\mathcal{F}$-measurable we have on $\Delta$,
\begin{align}\label{eq:small_ball_reduction}
\p(|\langle \eta,X_n\rangle - Y|\le \varepsilon|\mathcal{F}) \le \sup_{u \in \R} \p(|\sum_{i\in I} \eta'_i x_i - u| \le \varepsilon\hat{\Sigma}^{-1}|\mathcal{F}) \le \sup_{u \in \R} \p(|\sum_{i\in I} \eta'_i x_i - u| \le C\varepsilon|\mathcal{F}),
\end{align}
where in the second inequality we used the fact that on $\Delta$, $\hat{\Sigma}^2 \ge c$.
Moreover, by Remark \ref{re:conditional}, the right-hand side above equals
\begin{displaymath}
\sup_{u \in \R} \p(|\sum_{i\in I} \eta'_i x_{\tau(i)} - u| \le C\varepsilon|\mathcal{F}),
\end{displaymath}
where $\tau$ is a random permutation of $I$, distributed (conditionally on
$\mathcal{F})$ uniformly, and so we are in position to use Lemma
\ref{le:CLT_exchangeable}. Denote $W = \sum_{i\in I} \eta'_i x_{\tau(i)}$,
$\sigma^2 = \E (W^2|\mathcal{F})$. Note that $\sum_{i\in I} x_i = 0$ and
$\sum_{i\in I} x_i^2 = \#I$. Moreover on $\Delta$ we have $|x_i| \le CK$.
Using the fact that the density of a Gaussian distribution with variance
$\sigma^2$ is bounded from above by $\sigma^{-1}$ and $\#I \ge n/2$, we get by
Lemma \ref{le:CLT_exchangeable} that on $\Delta$,
\begin{align*}
\p(|\sum_{i\in I} \eta'_i x_{\tau(i)} - u| \le \varepsilon|\mathcal{F}) \le C\varepsilon \sigma^{-1} +
C\frac{KL|\eta'_{|I}|}{\sigma n^{1/2}},
\end{align*}
where $L = \max_{i\in I} \sqrt{\# I}|\eta_i'|/|\eta'_{|I}| \le \max_{i\in I} \sqrt{n}|\eta_i'|/|\eta'_{|I}|$. Note that by the definition of $I$, we have $|\eta_i'| \le 1/\sqrt{\theta n}$, so
$L \le \frac{1}{\sqrt{\theta}|\eta'_{|I}|}$ and the above bound implies that on $\Delta$,
\begin{align}\label{eq:conditional_BE}
\p(|\sum_{i\in I} \eta'_i x_{\tau(i)} - u| \le \varepsilon|\mathcal{F}) \le C\varepsilon \sigma^{-1} +
C\frac{K}{\sigma \theta^{1/2}n^{1/2}}.
\end{align}

It remains to estimate $\sigma$ from below on the set $\Delta$.

By Lemma \ref{le:CLT_exchangeable} we have
\begin{displaymath}
\sigma^2 = \frac{\#I}{\#I - 1} \Big(\sum_{i\in I} \eta_i'^2 -\frac{1}{\#I}(\sum_{i\in I} \eta'_i)^2\Big) \ge \sum_{i\in I} \eta_i'^2 -\frac{1}{\#I}(\sum_{i\in I} \eta'_i)^2.
\end{displaymath}
The function $t\mapsto \sqrt{t}$ is $C/|\eta'_{|I}|$ Lipschitz on $(|\eta'_{|I}|^2/2,\infty)$, so we get that if $\sum_{i\in I} (\eta'_i)^2 - \frac{1}{\#I}(\sum_{i\in I} \eta_i)^2 \le \kappa$ with $\kappa < |\eta'_{|I}|^2/4$, then
\begin{displaymath}
(\sum_{i\in I} (\eta'_i)^2)^{1/2} - \frac{1}{\sqrt{\#I}}|\sum_{i\in I} \eta_i| \le C \frac{\kappa}{|\eta'_{|I}|},
\end{displaymath}
which can be rewritten as
\begin{displaymath}
|\eta'_{|I}| - |\langle \eta'_{|I},\frac{1}{\sqrt{\#I}}\Ind{I}\rangle| \le C\frac{\kappa}{|\eta'_{|I}|}.
\end{displaymath}
Multiplying both sides by $2|\eta'_{|I}|$ and using that $|\frac{\Ind{I}}{\sqrt{\#I}}| = 1$, we get
\begin{align}\label{eq:dist-eta-to-ones}
|\eta'_{|I} - \frac{h}{\sqrt{\#I}}\Ind{I}|^2 
  \le C\kappa
\end{align}
where $h = |\eta'_{|I}| {\rm sgn}(\langle
\eta'_{|I},\frac{1}{\sqrt{\#I}}\Ind{I}\rangle)$. From
\eqref{eq:assumption_on_eta_prime} it follows, that $|\eta'_{|I}|^2 \ge
r^2/2$. For $\kappa = cr^{2}$ with a sufficiently small absolute constant $c$
(in particular we want to assure that $\kappa \le |\eta'_{|I}|^2/4$), the
right hand side of~\eqref{eq:dist-eta-to-ones} is smaller than $r^2/2$ and so
(again by \eqref{eq:assumption_on_eta_prime}) the inequality
\eqref{eq:dist-eta-to-ones} cannot hold on $\Delta$.

Thus, on $\Delta$ we have
\begin{displaymath}
\sigma \ge cr,
\end{displaymath}
which, when combined with \eqref{eq:conditional_BE} gives
\begin{displaymath}
\p(|\sum_{i\in I} \eta'_i x_{\tau(i)} - u| \le \varepsilon|\mathcal{F}) \le C\frac{\varepsilon}{r} +
C\frac{K}{r \theta^{1/2}n^{1/2}}
\end{displaymath}
on $\Delta$.
Going now back to \eqref{eq:small_ball_reduction}, \eqref{eq:bound_on_Delta} and \eqref{eq:small_ball_wanted} we get
\begin{displaymath}
\p(\zeta \le \rho\varepsilon n^{-1/2}) \le C\frac{\varepsilon}{\delta r} +
C\frac{K}{\delta r \theta^{1/2}n^{1/2}} + \frac{2}{\delta}\exp(-cn/K^2).
\end{displaymath}
Together with \eqref{eq:bound_on_beta} this gives (after adjusting $c$)
\begin{align*}
\p(s_n(B) \le \rho\varepsilon n^{-1/2})
\le C\frac{\varepsilon}{\delta r}
+
C\frac{K}{\delta r \theta^{1/2}n^{1/2}}
+ 2\exp(-cn/K^2).
\end{align*}
Plugging in the values of $\rho,\delta$ (equation \eqref{eq:def_rho_delta}) and $\theta, r$ (equation \eqref{eq:r_and_theta}) we get
\begin{align*}
&\p(s_n(B) \le \frac{1}{K+|z|}\varepsilon n^{-1/2}) \\
\le &  2\exp(-cn/K^2) + CK^2(K+|z|)\log(1+K+|z|)\varepsilon +
C \frac{K^4(K+|z|)\log^{3/2}(1+K+|z|)}{n^{1/2}},
\end{align*}
which ends the proof (we again adjust the constants to remove the first term
on the right-hand side).
\end{proof}

\section{Proof of Theorem \ref{th:circular_law}\label{se:circular_law} (circular law for the first model)}
\label{sec:proofs_circular}

By \cite[Lemma A2]{BCC2} or \cite{BCAround}, to prove that $\nu_{A^{(n)}}$
converges weakly in probability to the uniform measure on the unit disc, it is
enough to show that the following is true:
\begin{itemize}
\item[(i)] \textbf{Singular values of shifts.} For all $z \in \C$, there
  exists a non random probability measure $\nu_z$ on $\mathbb{R}_+$,
  absolutely continuous with respect to the Lebesgue measure, depending only
  on $z$, and such that
  \[
  \nu_{z,n} :=\frac{1}{n}\sum_{k=1}^n\delta_{s_k(A^{(n)}-z\id_n)}
  \underset{n\to\infty}{\weak}\nu_z.
  \]
  Moreover, for almost all $z\in \C$,
\begin{align*}
    U(z) := -\int_{\R_+} \log (s) \nu_z(ds) =
    \left\{
      \begin{array}{cc}
        -\log |z| & \textrm{if $|z| >1$},\\
        \frac{1}{2}(1-|z|^2) & \textrm{otherwise};
      \end{array}
    \right.
\end{align*}
\item[(ii)] \textbf{Uniform integrability.} For all $z \in \C$, the function
  $s \mapsto \log(s)$ is uniformly integrable in probability with respect to
  the family of measures $\{\nu_{z,n}\}_{n\ge 1}$, i.e.
  \begin{displaymath}
    \forall\varepsilon>0,\quad
    \lim_{t\to \infty}\limsup_{n\to \infty}
    \p\Big(\int_{\R_+} |\log s|\ind{|\log s| > t}d\nu_{z,n}(s) > \varepsilon\Big)=0.
\end{displaymath}
\end{itemize}

The first item (i) is settled by Theorem \ref{th:shifted_matrix}. It remains
to prove assertion (ii), which is the aim of the rest of this section. We
follow the Tao and Vu approach \cite{TV}. We will combine estimates on the
smallest singular value coming from Section \ref{se:ssv} with a rougher bound
on intermediate singular values of the matrix, obtained in the following
lemmas.

\begin{lemma}[Distance to a random subspace]\label{le:distance} Let $R$ be a
  deterministic $n\times n$ matrix. Denote the rows of $X^{(n)} + R$ by
  $Z_1,\ldots,Z_n$. Consider $k\le n-1$ and let $H$ be the random subspace of
  $\C^n$ spanned by $Z_1,\ldots,Z_k$. Then with probability at least $1-
  2\exp(-c(n-k)/K_n^2)$,
  \begin{displaymath}
    \DIST(Z_{k+1},H) \ge c\sqrt{n-k}.
  \end{displaymath}
\end{lemma}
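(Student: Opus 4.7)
The strategy is to condition on $\mathcal{F}' = \sigma(\pi_n(i,j): 1\le i\le k)$, so that the orthogonal projection $P = P_{H^\perp}$ onto $H^\perp$ is $\mathcal{F}'$-measurable with $\tr P \ge n-k$, and by Remark \ref{re:conditional} the conditional law of $X_{k+1}$ is that of the first $n$ coordinates of a uniformly random permutation of the $N = (n-k)n$ values of $\bfx^{(n)}$ not used in the first $k$ rows. Since $\DIST(Z_{k+1}, H) = |PZ_{k+1}|$, the proof reduces to a conditional second-moment lower bound combined with a Talagrand-type concentration bound applied to this conditional permutation.

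For the second-moment estimate, exchangeability of $X_{k+1}$ under $\p(\cdot\mid\mathcal{F}')$ together with the sampling-without-replacement identities yields
\[
\E\bigl[|PZ_{k+1}|^2 \bigm| \mathcal{F}'\bigr] = (a-b)\tr P + (b-m^2)|P\mathbf{1}|^2 + |P(m\mathbf{1}+R_{k+1})|^2,
\]
with $m = -S_0/N$, $a = (n^2-T_0)/N$ and $b-m^2 = -(a-m^2)/(N-1)$, where $S_0 = \sum_{i \le k,j}X_{ij}$ and $T_0 = \sum_{i\le k,j}X_{ij}^2$. Discarding the non-negative last term and using $|P\mathbf{1}|^2 \le n$, the task reduces to finding an $\mathcal{F}'$-measurable event $E$ with $\p(E^c) \le \exp(-c(n-k)/K_n^2)$ on which the empirical variance $a-m^2$ of the unused entries is bounded below by a constant $c_0 > 0$. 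This is achieved by applying Theorem \ref{th:convex_concentration} to the convex Lipschitz statistics $S_0$ and $\sqrt{T_0}$ of the first $k$ rows (with means $0$ and $\mathcal{O}(\sqrt{kn})$ respectively), yielding $\E[|PZ_{k+1}|^2 \mid \mathcal{F}'] \ge c_1(n-k)$ on $E$.

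For the concentration step, the map $x \mapsto |P(x+R_{k+1})|$ is convex and $1$-Lipschitz in the Euclidean norm on $\C^n$, hence extends trivially to a convex, $1$-Lipschitz function of the full permuted array of the $N$ remaining values. Theorem \ref{th:convex_concentration}, applied conditionally on $\mathcal{F}'$ after rescaling entries to $[0,1]$ (producing a factor $K_n$ in the Lipschitz constant), yields
\[
\p\bigl( \bigl| |PZ_{k+1}| - \E[|PZ_{k+1}| \mid \mathcal{F}'] \bigr| \ge t \bigm| \mathcal{F}' \bigr) \le 2\exp(-ct^2/K_n^2).
\]
By Corollary \ref{co:convex_moments}, $\E[|PZ_{k+1}| \mid \mathcal{F}'] \ge \sqrt{\E[|PZ_{k+1}|^2\mid\mathcal{F}']} - CK_n \ge c_2\sqrt{n-k}$ on $E$ provided $n-k \ge CK_n^2$ (otherwise the claim is trivial as the target probability is bounded away from $1$). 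Taking $t = \tfrac12 c_2\sqrt{n-k}$ and combining with $\p(E^c)$ completes the argument. The main obstacle is securing the lower bound $a - m^2 \ge c_0$ uniformly in $k$, which requires delicate Talagrand concentration for the scalar statistics $S_0$ and $T_0$ of the first $k$ rows of a shuffled matrix, with a failure tail matching the required $\exp(-c(n-k)/K_n^2)$.
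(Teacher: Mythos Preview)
Your proof is correct and follows essentially the same route as the paper: condition on the first $k$ rows, compute the conditional second moment of the distance via the sampling-without-replacement covariance structure, control the conditional mean and variance of the unused entries via Talagrand concentration (Theorem~\ref{th:convex_concentration}), and finish with Corollary~\ref{co:convex_moments} and another application of Theorem~\ref{th:convex_concentration}. The only notable difference is that the paper first enlarges $H$ to $\tilde{H} = \mathrm{span}(H,\mathbf{1},R_{k+1})$ at the cost of two dimensions, which forces $P\mathbf{1}=0$ and $PR_{k+1}=0$ and thereby eliminates the terms $(b-m^2)|P\mathbf{1}|^2$ and $|P(m\mathbf{1}+R_{k+1})|^2$ that you carry explicitly; this reduces the computation to the centered case and is a cosmetic simplification rather than a different idea.
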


\begin{proof}[Proof of Lemma \ref{le:distance}]
  In what follows we will suppress the superscript $(n)$ and write simply $X_{ij}$
  for the entries of the matrix $X^{(n)}$. The rows of $X^{(n)}$ will be denoted by $X_1,\ldots,X_n$.

  We can assume that $k \le n-CK_n^2$ for some absolute constant $C$, otherwise
  the estimate on probability given in the lemma becomes trivial for $c$
  sufficiently small. Consider the $\sigma$-field $\mathcal{F}$ generated by
  $(\pi(i,j))_{1\le i\le k,1\le j\le n}$.

  We will first replace $H$ by $\tilde{H} = \textrm{span}(H,\Ind{},Y)$ where $\Ind{}$
  is the vector of ones and $Y$ is the $(k+1)$-st row of $R$. We can assume
  that with probability one $\tilde{H}$ is of dimension $l = k+2$ (otherwise
  we may enlarge $\tilde{H}$ to a subspace of dimension $l$ in an
  $\mathcal{F}$-measurable way). Let $X = X_{k+1} - \hat{\mu}\Ind{}$, where
  $\hat{\mu} = \frac{1}{n(n-k)} \sum_{i=k+1}^n\sum_{j=1}^n X_{ij}$ (note that
  $\hat{\mu}$ is $\mathcal{F}$-measurable) and denote the coordinates of $X$
  by $x_1,\ldots,x_n$. Note that $\DIST(Z_{k+1},H) \ge \DIST(Z_{k+1},\tilde{H})= \DIST(X,\tilde{H})$.

  We have $\E(x_i|\mathcal{F}) = 0$. Moreover, since $\E \hat{\mu} = 0$, by
  Theorem \ref{th:convex_concentration} we have
  \begin{align}\label{ineq:bound_on_the_mean}
    \p(|\hat{\mu}| \ge \gamma) \le 2\exp(-c\gamma^2n(n-k)/K_n^2).
  \end{align}
  Consider now $M^2 := \sum_{i=k+1}^n \sum_{j=1}^n X_{ij}^2$ (which is
  $\mathcal{F}$-measurable) and note that by Corollary \ref{co:convex_moments}, $\E M \ge \sqrt{\E M^2} - CK_n \ge
  2^{-1}\sqrt{n(n-k)}$ (where we used the assumption $n-k \ge CK_n^2$). Thus
  again by Theorem \ref{th:convex_concentration} we have
  \begin{align}\label{ineq:bound_on_the_variance}
    \p( M^2 \le cn(n-k)) \le 2\exp(-c n(n-k)/K_n^2).
  \end{align}
  Define $\hat{\Sigma}^2 = \E(x_j^2|\mathcal{F}) =
  \frac{1}{n(n-k)}\sum_{i=k+1}^{n}\sum_{j=1}^n (X_{ij} - \hat{\mu})^2 =
  \frac{1}{n(n-k)}M^2 - \hat{\mu}^2$ and
  \begin{align*}
    \Delta = \{\hat{\Sigma} \ge c\}.
  \end{align*}
  Combining~\eqref{ineq:bound_on_the_variance} and~\eqref{ineq:bound_on_the_mean} with $\gamma$ small enough we obtain
  \begin{align}\label{eq:bound_on_the_other_delta}
    \p(\Delta) \ge 1 - 2\exp(-cn(n-k)/K_n^2).
  \end{align}
  Let $P$ be the orthogonal projection on $\tilde{H}^\perp$ and let
  $e_1,\ldots,e_{n-l}$ be an orthonormal basis in $\tilde{H}^\perp$. Denote $e_i =
  (e_{ij})_{1\le j\le n}$. We have
  \begin{align*}
    \E (\DIST(X,\tilde{H})^2|\mathcal{F}) &= \E (|PX|^2|\mathcal{F}) =  \sum_{i=1}^{n-l} \E \Big(\Big|\sum_{j=1}^n x_j \bar{e}_{ij}\Big|^2|\mathcal{F}\Big)\\
    & = \sum_{i=1}^{n-l} \sum_{j=1}^n |e_{ij}|^2\E(x_j^2|\mathcal{F}) +
    \sum_{i=1}^{n-l}\sum_{1\le j\neq t \le n} \bar{e}_{ij}e_{it} \E
    (x_{j}x_{t}|\mathcal{F}).
  \end{align*}
  We have
  \begin{displaymath}
    \E(x_j^2|\mathcal{F}) = \hat{\Sigma}^2
  \end{displaymath}
  and for $j\neq t$,
  \begin{align*}
    \E(x_j x_t|\mathcal{F}) &= \frac{1}{n(n-k)[n(n-k)-1]} \sum_{\stackrel{(a,b)\neq (r,s)}{k+1 \le a,r\le n, 1\le b,s \le n}} (X_{ab}-\hat{\mu})(X_{rs}-\hat{\mu}) \\
    &= - \frac{1}{n(n-k)[n(n-k)-1]}\sum_{a = k+1}^n \sum_{b=1}^n (X_{ab} -
    \hat{\mu})^2 = - \frac{1}{n(n-k)-1}\hat{\Sigma}^2.
  \end{align*}
  Thus
  \begin{align*}
    \E (\DIST(X,\tilde{H})^2|\mathcal{F}) &= \hat{\Sigma}^2\sum_{i=1}^{n-l}\Big(\sum_{j=1}^n |e_{ij}|^2 - \frac{1}{n(n-k)-1}\sum_{1\le j\neq t\le n} \bar{e}_{ij}e_{it}\Big)\\
    &=\hat{\Sigma}^2\sum_{i=1}^{n-l}\Big(\sum_{j=1}^n |e_{ij}|^2 - \frac{1}{n(n-k)-1}\Big|\sum_{ j=1}^n \bar{e}_{ij}\Big|^2 + \frac{1}{n(n-k)-1}\sum_{j=1}^n |e_{ij}|^2\Big)\\
    &\ge \hat{\Sigma}^2 \sum_{i=1}^{n-l}\Big(\frac{n(n-k)}{n(n-k)-1} - \frac{n}{n(n-k)-1}\Big)\\
    & = \hat{\Sigma}^2 (n-l)\frac{n^2- kn - n}{n^2 - kn - 1}.
  \end{align*}
  Thus, using the assumption that $k\le n-C$ we get for $n \ge C$ that on
  $\Delta$,
  \begin{displaymath}
    \E (\DIST(X,\tilde{H})^2|\mathcal{F}) \ge c (n-k).
  \end{displaymath}
  Using now Corollary \ref{co:convex_moments}, the fact that $P$ is 1-Lipschitz
  and the assumption $n-k \ge CK_n^2$, we get on $\Delta$,
  \begin{displaymath}
    \E (\DIST(X,\tilde{H})|\mathcal{F}) \ge c\sqrt{n-k}.
  \end{displaymath}
  Applying Theorem \ref{th:convex_concentration} we obtain on $\Delta$,
  \begin{displaymath}
    \p(\DIST(X,\tilde{H}) \ge c\sqrt{n-k}|\mathcal{F}) \ge 1 -2\exp(-c(n-k)/K_n^2),
  \end{displaymath}
  which when combined with \eqref{eq:bound_on_the_other_delta} gives
  \begin{displaymath}
    \p(\DIST(X,\tilde{H}) \ge c\sqrt{n-k}) \ge 1- 2\exp(-c(n-k)/K_n^2).
  \end{displaymath}
\end{proof}

\begin{lemma}[Lower bound on the intermediate singular
  values]\label{le:intermediate_sv}
  Let $s_1 \ge \ldots\ge s_n$ be the singular values of $A^{(n)} - z\id_n$. If
  $K_n = \mathcal{O}(n^{1/(2+\delta)})$ for some $\delta > 0$, then there exists $\gamma
  \in (0,1)$ such that for every $z \in \C$ we have
  \begin{displaymath}
    \lim_{n\to\infty}\p\Big(\exists_{n^\gamma \le i \le n-1} \  s_{n-i} \le c \frac{i}{n}\Big)=0.
  \end{displaymath}
\end{lemma}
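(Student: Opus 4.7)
The plan is to adapt the Tao--Vu negative second moment approach, combining the distance bound of Lemma \ref{le:distance} with a careful choice of submatrix. Write $A = A^{(n)} - z\id_n$ and fix $i$ in the range $n^\gamma \le i \le n-1$, where $\gamma \in (0,1)$ will be chosen below. Set $j = \lfloor i/2 \rfloor$ and $m = n - j$, and let $B$ be the $m \times n$ submatrix of $A$ formed by the first $m$ rows. Cauchy's interlacing for singular values (each row deletion weakly decreases them, since $BB^\ast$ is a principal submatrix of $AA^\ast$) gives $s_{n-i}(A) \ge \sigma_{n-i}(B)$. The key observation is that the $j+1$ smallest singular values of $B$, namely $\sigma_{n-i}(B) \ge \sigma_{n-i+1}(B) \ge \cdots \ge \sigma_m(B)$, are each bounded above by $\sigma_{n-i}(B)$; this will gain a crucial factor of $i$.

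I then invoke the standard identity for rectangular matrices
\[
\sum_{k=1}^m \sigma_k(B)^{-2} = \sum_{k=1}^m \DIST(R_k, H_k)^{-2},
\]
where $R_k$ is the $k$-th row of $B$ and $H_k$ is the span of the other $m-1$ rows of $B$. Combining with the previous observation yields
\[
(j+1)\sigma_{n-i}(B)^{-2} \le \sum_{k=1}^m \DIST(R_k, H_k)^{-2}.
\]
The naive choice $m = n-i$ would only lead to $s_{n-i}(A) \ge c\sqrt{i}/n$, which is insufficient; the improvement comes from having $\Theta(i)$ singular values of $B$ simultaneously below the quantity of interest.

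Since $H_k$ is spanned by $m-1 = n-j-1$ vectors, its codimension in $\C^n$ is at least $j+1$. The rows of $A$ are $n^{-1/2}$ times those of $M := X^{(n)} + R$ with $R = -\sqrt{n}z\id_n$ deterministic, and by the row-exchangeability of $X^{(n)}$ any row of $M$ can be regarded as its last row after a suitable deterministic permutation of the rows of $R$. Applying Lemma \ref{le:distance} with $k = m-1$ then yields $\DIST(R_k, H_k) \ge c\sqrt{i/n}$ with probability at least $1 - 2\exp(-c i/K_n^2)$, for each fixed $k$. A union bound over $k \le m \le n$ provides the same lower bound for all $k$ at failure probability $\le 2n\exp(-c i/K_n^2)$. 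On this event,
\[
(j+1)\sigma_{n-i}(B)^{-2} \le \frac{mn}{c^2 i} \le \frac{n^2}{c^2 i},
\]
hence $\sigma_{n-i}(B) \ge c i/n$ and therefore $s_{n-i}(A) \ge c i/n$.

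A final union bound over $i \in [n^\gamma, n-1]$ gives total failure probability at most $2n^2 \exp(-c n^\gamma / K_n^2)$. With $K_n = \mathcal{O}(n^{1/(2+\delta)})$, the exponent $n^\gamma/K_n^2$ grows polynomially provided $\gamma > 2/(2+\delta)$, and any such $\gamma < 1$ kills all polynomial prefactors. The main obstacle is precisely this balancing act in the choice of $m$: the straightforward Tao--Vu choice $m = n-i$ only yields the weaker bound $s_{n-i}(A) \ge c\sqrt{i}/n$, so one must inflate the submatrix by taking $m = n-\lfloor i/2 \rfloor$ to get $\Theta(i)$ small singular values of $B$ and pick up the extra factor of $\sqrt{i}$. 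A secondary technical point is that Lemma \ref{le:distance} is phrased for the last row, so the row-exchangeability of $X^{(n)}$ must be invoked each time (with the deterministic shift permuted accordingly) to reduce an arbitrary row $R_k$ to this canonical configuration.
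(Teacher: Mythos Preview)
Your argument is correct and follows essentially the same route as the paper's proof: the Tao--Vu negative second moment identity applied to the $m\times n$ submatrix with $m=n-\lfloor i/2\rfloor$, combined with Lemma~\ref{le:distance} (via row-exchangeability) to bound each $\DIST(R_k,H_k)$, and the same final choice of $\gamma>2/(2+\delta)$. The only cosmetic difference is that you work directly with the scaled matrix $A$ (so the distance bound reads $c\sqrt{i/n}$), whereas the paper works with $X^{(n)}-\sqrt{n}z\id$ and divides by $\sqrt{n}$ at the end.
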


\begin{proof}[Proof of Lemma \ref{le:intermediate_sv}]
  The proof follows an argument due to Tao and Vu \cite{TV}. Let $R =
  -\sqrt{n}z\id$ and recall the notation of Lemma \ref{le:distance}. For some
  $\gamma \in (0,1)$ to be chosen later on, consider $i \ge n^\gamma$. Let $k
  = n - \lfloor i/2\rfloor$ and let $B$ be the $k\times n$ matrix with rows
  $Z_1,\ldots,Z_k$. By Cauchy interlacing inequalities we have
  $\sqrt{n}s_{n-j} = s_{n-j}(X^{(n)}+R) \ge s_{n-j}(B)$ for $j\ge \lfloor
  i/2\rfloor$. Let $H_j$, $j = 1,\ldots,k$ be the subspace of $\C^n$ spanned
  by all the rows of $B$ except for the $j$-th one. By \cite[Lemma A4]{TV},
  \[
  \sum_{j=1}^k s_j(B)^{-2} = \sum_{j=1}^k \DIST(Z_j,H_j)^{-2}.
  \]
  By Lemma \ref{le:distance}, for each $j\le k$, $\DIST(Z_j,H_j) \ge
  c\sqrt{n-k+1}\ge c\sqrt{i}$ with probability at least $1 - 2\exp(-c i/K_n^2) \ge 1 - 2\exp(-c n^\gamma/K_n^2)$. (Note that we can use the lemma here thanks to exchangeability of the rows of the matrix).
  By the union bound,
  with probability at least $1 - 2n\exp(-c n^\gamma/K_n^2)$, we get
  \[
  \sum_{j=1}^k s_j(B)^{-2} \le C \frac{k}{i}.
  \]
  On the other hand, the left-hand side above is at least $s_{n-i}(B)^{-2}
  (k - n + i) \ge s_{n-i}(B)^{-2} i/2$. This gives that with probability at least $1 - 2n\exp(-c n^\gamma/K_n^2)$,
  \[
 n s_{n-i}^2 \ge s_{n-i}(B)^2 %
  \ge c \frac{ i^2 }{n - \lfloor n^{\gamma}/2\rfloor},
  \]
  which implies that $s_{n-i} \ge
  c\frac{i}{n}$. Taking another union bound
  over all $i \ge n^\gamma$ we obtain that
  $s_{n-i} \ge c\frac{i}{n}$ for all $n^\gamma \le i \le n-1$ with probability at least
  $1 - 2n^2\exp(-cn^\gamma/K_n^2)$. For some $\gamma \in (0,1)$, we have $\gamma - 2/(2+\delta) > 0$ and so
  by the assumption on $K_n$,
  \begin{displaymath}
    2n^2\exp(-c n^\gamma/K_n^2) \to 0,
  \end{displaymath}
  which ends the proof.
\end{proof}

\begin{proof}[Conclusion of the proof of Theorem \ref{th:circular_law}]
  Recall that we have to prove (ii). By Markov's inequality it suffices to
  show that for some $\alpha > 0$ and some constant $C_{z}$,
\begin{displaymath}
  \lim_{n\to\infty}
  \p\Big(\int_0^\infty (s^{\alpha} + s^{-\alpha})d\nu_{z,n}(s) > C_z\Big)=0.
\end{displaymath}
Note that, using the notation of Lemma \ref{le:intermediate_sv}, we have
\begin{displaymath}
  \int_0^\infty (s^{\alpha} + s^{-\alpha})d\nu_{z,n}(s) %
  = \frac{1}{n}\sum_{i=1}^n (s_i^\alpha + s_i^{-\alpha}).
\end{displaymath}
Note also that for all $\alpha \in (0,2]$ we have
\begin{align}\label{eq:positive_exponent}
  \Big(\frac{1}{n}\sum_{i=1}^n s_i^\alpha\Big)^{2/\alpha} %
  &\le \frac{1}{n}\sum_{i=1}^n s_i^2 \nonumber\\
  &= n^{-1}\|A^{(n)} - z\id_n\|_{\mathrm{HS}}^2\nonumber\\ %
  &\le 2|z|^2 + 2n^{-2}\sum_{i,j=1}^n |\bfx_{ij}^{(n)}|^2 \nonumber\\
  &= 2|z|^2+2.
\end{align}

As for the other sum, by the estimate of Theorem \ref{th:ssv_detailed} together with the assumption on $K_n$
we have with probability tending to one for some finite constant $\beta>0$,
$s_n \ge n^{-\beta}$.

Combining this with Lemma \ref{le:intermediate_sv} we get with probability
tending to one,
\begin{align*}
  \frac{1}{n} \sum_{i=1}^n s_{i}^{-\alpha} &= \frac{1}{n}\sum_{i = 0}^{\lfloor
    n^\gamma\rfloor } s_{n-i}^{-\alpha} %
  + \frac{1}{n} \sum_{i = \lfloor n^\gamma\rfloor +1}^{n-1} s_{n-i}^{-\alpha}\\
  &\le \frac{1}{n}n^{\beta\alpha} n^{\gamma} %
  + \frac{1}{n}C \sum_{i=\lfloor n^\gamma\rfloor +1}^{n-1} %
  \Big(\frac{n}{i}\Big)^\alpha \\
  &\le n^{\beta\alpha + \gamma - 1} + C_{\alpha} n^{\alpha - 1}
  n^{1-\alpha} \le C_{\alpha}
\end{align*}
for $\alpha$ small enough. Together with \eqref{eq:positive_exponent} this
gives (ii), and Theorem \ref{th:circular_law} is proved.
\end{proof}

\section{Proof of Theorem \ref{th:circular_law_2}\label{se:circular_law_2} (circular law for the second model)}

\begin{proof}[Proof of Theorem \ref{th:circular_law_2}]
  Let $d$ be any distance metrizing the weak convergence of probability
  measures on $\C$, such as the bounded-Lipschitz distance (also referred to
  as the Fortet-Mourier distance by some authors). Let $\mathcal{M}_n(\delta)$
  be the set of all $n\times n$ matrices $\bfx$, satisfying (A1-A2) with
  $K_n \le n^{1/(10+\delta/3)}$. Let us denote by
  $A^{(n)}(\bfx)=n^{-1/2}X^{(n)}(\bfx)$ the matrix constructed from $\bfx$ as
  in \eqref{eq:model}. Theorem \ref{th:circular_law} implies that for any
  $\delta > 0$ and $\varepsilon > 0$
  \begin{displaymath}
    \sup_{\bfx \in \mathcal{M}_n(\delta)} \p(d(\nu_{A^{(n)}(\bfx)},\nu^{\mathrm{circ}}) > \varepsilon) \to 0
  \end{displaymath}
  as $n\to \infty$. Indeed if for some $\varepsilon$ there exists a sequence
  $n_k \to \infty$ and $n_k\times n_k$ matrices $\bfx_{n_k}$ such that
  $\p(d(\nu_{A^{(n_k)}(\bfx_{n_k})},\nu^{\mathrm{circ}}) > \varepsilon) > \varepsilon$,
  then we can complete this sequence to a sequence $\bfx_n$, $n\ge 1$,
  violating Theorem \ref{th:circular_law}.

Consider now the matrices $B^{(n)}$ and for any $n$ define the event
\begin{displaymath}
\Delta_n = \{\max_{i,j\le n} \sigma_n^{-1}|Y_{ij}^{(n)} - \mu_n| \le n^{1/(10+\delta/3)}\}.
\end{displaymath}
Note that by the union bound, Markov's inequality and exchangeability
\begin{displaymath}
\p(\Delta_n') \le \frac{n^2 \sup_m \E\Big(\sigma_m^{-1}|Y^{(m)}_{11}-\mu_m|\Big)^{20+\delta}}{n^{\frac{20+\delta}{10+\delta/3}}} \to 0.
\end{displaymath}

Let $\pi_n$ be a random (uniform) permutation of $[n]\times[n]$, independent of $B^{(n)}$. Since
\begin{displaymath}
\bar{B}^{(n)} = \frac{1}{\sqrt{n}\sigma_n}(Y^{(n)}_{\pi_n(i,j)} - \mu_n)_{1\le i,j\le n}
\end{displaymath}
has the same distribution as $B^{(n)}$, it is enough to show that
$\nu_{\bar{B}^{(n)}}$ converges weakly in probability to the circular law. By the
Fubini theorem we have for any $\varepsilon > 0$,
\begin{align*}
\p(d(\nu_{\bar{B}^{(n)}},\nu^{\mathrm{circ}}) > \varepsilon) & \le \p(\Delta_n') + \E_B \Big(\p_{\pi_n}(d(\nu_{\bar{B}^{(n)}},\nu^{\mathrm{circ}}) > \varepsilon)\Ind{\Delta_n}\Big)\\
&\le \p(\Delta_n') + \sup_{\bfx \in \mathcal{M}_n(\delta)} \p(d(\nu_{A^{(n)}(\bfx)},\nu^{\mathrm{circ}}) > \varepsilon) \to 0.
\end{align*}
\end{proof}

\makeatletter
\def\@MRExtract#1 #2!{#1} %
\renewcommand{\MR}[1]{
  \xdef\@MRSTRIP{\@MRExtract#1 !}%
  \href{http://www.ams.org/mathscinet-getitem?mr=\@MRSTRIP}{MR-\@MRSTRIP}}
\makeatother

\providecommand{\bysame}{\leavevmode\hbox to3em{\hrulefill}\thinspace}

\end{document}